\newtheorem{definition}{Definition}[section]
\newtheorem{theorem}{Theorem}[section]
\newtheorem{lemma}{Lemma}[section]
\newtheorem{proposition}{Proposition}[section]
\DeclareMathOperator{\prob}{\mathbb{P}}
\DeclareMathOperator{\cov}{Cov}
\author{Alef E.~Sterk\footnote{Bernoulli Institute for Mathematics, Computer Science and Artificial Intelligence,
University of Groningen, PO Box 407, 9700 AK Groningen, The Netherlands. E-mail: a.e.sterk@rug.nl}}
\title{Max-semistable extreme value laws for autoregressive processes with Cantor-like marginals}
\date{\today}
\begin{document}

\maketitle

\begin{abstract}
This paper considers a family of autoregressive processes with marginal distributions resembling the Cantor function. It is shown that the marginal distribution is in the domain of attraction of a max-semistable distribution. The main result is that the extreme value law for the  autoregressive process is obtained by including an extremal index in the law for an i.i.d.\ process with the same marginal distribution. Connections with extremes in deterministic dynamical systems and the relevance of max-semistable distributions in that context are also pointed out.
\end{abstract}

\textbf{Key words:} autoregressive process; Cantor function; extreme value theory; max-semistable laws; extremal index

\textbf{MSC 2020:}
60G70,  
60F99	


\newpage

\section{Introduction}

Let $X_0,X_1,X_2,\dots$ be a stationary sequence of random variables with marginal distribution function $F$. Extreme value theory is concerned with finding the possible limit laws for the partial maximum $M_n = \max\{X_0,\dots,X_{n-1}\}$ under suitable normalizations. In this paper we focus  on linear normalizations of the subsequence $M_{k_n}$ where $k_n$ has a geometric growth rate. That is, we consider the convergence of
\begin{equation}
\label{eq:convergence_max}
\lim_{n\to\infty} \prob(a_n(M_{k_n} - b_n) \leq x) = G(x)
\end{equation}
subject to the condition
\begin{equation}
\label{eq:Megyesi:1.2}
\lim_{n\to\infty} \frac{k_{n+1}}{k_n} = c \geq 1.
\end{equation}

If the variables $X_k$ are independent, then equation \eqref{eq:convergence_max} amounts to
\begin{equation}
\label{eq:Megyesi:1.1}
\lim_{n\to\infty} F(x/a_n + b_n)^{k_n} = G(x).
\end{equation}
For $c=1$ it can be shown that $F$ is necessarily in the domain of attraction of a \emph{max-stable distribution} \cite{Megyesi:02, Pancheva:92} and the possible forms of $G$ are well-known in the literature on extremes \cite{Beirlant:2004, Embrechts, Galambos:78, HF:2006, Resnick:1987}. The limit laws for stationary sequences of dependent variables are studied in detail in \cite{Leadbetter1980}.

A different class of distributions is obtained for $c>1$. In that case, $F$ is said to be in the domain of attraction of a \emph{max-semistable distribution} and (up to changes in scale and location) $G$ is of the form
\begin{equation}
\label{eq:maxsemistable_family}
G(x)
=
\begin{cases}
\exp\{ -(1+\xi x)^{-1/\xi} \nu(\log((1+\xi x)^{-1/\xi})) \} & \text{if } \xi \neq 0 \text{ and } 1+\xi x>0, \\
\exp\{ -e^{-x}\nu(x) \} & \text{if } \xi = 0 \text{ and } x \in \mathbb{R},
\end{cases}
\end{equation}
where $\xi \in \mathbb{R}$ and the function $\nu$ is positive, bounded, and periodic with period $\log(c)$ \cite{Grinevich:92, Grinevich:94, Pancheva:92}. An alternative representation for $G$ is derived in \cite{CantoHaanTemido:2002}. Limit laws for stationary sequences of dependent variables, including an extension of the concept of extremal index, are discussed in \cite{TemidoCanto:2003}.

In this paper we consider autoregressive processes of the form
\begin{equation}
\label{eq:AR_proc}
X_{k+1} = \beta X_{k} + \varepsilon_{k+1}, \quad k \geq 0,
\end{equation}
where $X_0$ is a random variable and $(\varepsilon_k)$ is an i.i.d.\ process with
\[
\prob(\varepsilon_k = 0) = p, \quad
\prob(\varepsilon_k = 1-\beta) = q,
\]
and where $q = 1-p$. We restrict to the parameters $0 < \beta \leq 1/2$ and $0 < p < 1$ but exclude the case $\beta=p=1/2$. We first determine a distribution function $F_{\beta,p}$ for which the variables $X_k$ are identically distributed. These distributions are singular, and the Cantor function is obtained in the particular case $\beta=1/3$ and $p=1/2$. It will be shown that $F_{\beta,p}$ lies in the domain of attraction of a max-semistable distribution.  The main result is to prove an extreme value law for the process \eqref{eq:AR_proc}. In fact, this law can be obtained by including an extremal index in the law for an i.i.d.\ process with the same marginal distribution. We also point out the connection with deterministic dynamical systems and the relevance of max-semistable distributions in that context.


\newpage

\section{The marginal distribution}

Let $F_{k}$ denote the distribution function of the variable $X_k$ in \eqref{eq:AR_proc}. Conditioning on the variable $\varepsilon_{k+1}$ gives the following recursion:
\begin{equation}
\label{eq:Fk_recursion}
F_{k+1}(x) = p F_{k}(x/\beta) + q F_{k}(x/\beta+1-1/\beta).
\end{equation}
Therefore, the distribution function for which the variables $X_k$ are identically distributed must satisfy the following functional equation:
\begin{equation}
\label{eq:functional}
F(x) = p F(x/\beta) + q F(x/\beta+1-1/\beta).
\end{equation}
The aim of this section is to show that there exists a unique distribution function that solves this equation. To that end, we first derive three properties which are implied by satisfying the functional equation.

\begin{lemma}
\label{lemma:symmetry}
If $F$ satisfies the functional equation \eqref{eq:functional}, then $G(x) = 1-F(1-x)$ satisfies the same equation in which the roles of $p$ and $q$ are interchanged.
\end{lemma}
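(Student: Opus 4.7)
The plan is to prove the lemma by direct substitution, since the claim is an algebraic identity involving an affine change of variables. I would define $G(x) = 1 - F(1-x)$, write down the equation we want to verify,
\[
G(x) = q\,G(x/\beta) + p\,G(x/\beta + 1 - 1/\beta),
\]
and reduce it to a statement about $F$ by expanding both $G$ terms on the right-hand side.

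The key computation is to observe that the affine arguments transform cleanly: $1 - x/\beta$ appears from the first $G$ term, while the argument of the second $G$ simplifies as $1 - (x/\beta + 1 - 1/\beta) = (1-x)/\beta$. After using $p+q=1$, the right-hand side becomes $1 - \bigl[p\,F((1-x)/\beta) + q\,F(1 - x/\beta)\bigr]$, so what remains to prove is
\[
F(1-x) = p\,F((1-x)/\beta) + q\,F(1 - x/\beta).
\]

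This last identity is precisely the functional equation \eqref{eq:functional} evaluated at the point $y = 1-x$, once one notices that $y/\beta + 1 - 1/\beta = (1-x)/\beta + 1 - 1/\beta = 1 - x/\beta$. Thus the argument is a one-line verification after unwinding the definitions.

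I do not anticipate any real obstacle here: the result is essentially a symmetry statement reflecting the fact that swapping $p \leftrightarrow q$ corresponds to swapping the two branches of the functional equation, which at the level of distributions corresponds to reflecting through $x = 1/2$ (since the attractor of the IFS sits in $[0,1]$). The only place where one must be careful is in checking that the affine substitutions match up exactly, which is done in the computation above.
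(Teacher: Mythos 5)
Your proof is correct and is essentially the same argument as the paper's: both rest on the two affine identities $1-(x/\beta+1-1/\beta)=(1-x)/\beta$ and $(1-x)/\beta+1-1/\beta=1-x/\beta$ together with $p+q=1$, the only difference being that you run the computation backwards (reducing the swapped equation for $G$ to the functional equation for $F$ at $y=1-x$) while the paper expands $G(x)=1-F(1-x)$ forwards. Since every step is a reversible equality, this is just the same verification read in the opposite direction.
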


\begin{proof}
We have that
\[
\begin{split}
G(x)
	& = 1-F(1-x) \\
	& = 1 - p F((1-x)/\beta) - q F((1-x)/\beta+1-1/\beta) \\
	& = p \big[ 1 - F(1-(x/\beta+1-1/\beta)) \big] + q \big[ 1 - F(1-x/\beta) \big] \\
	& = p G(x/\beta+1-1/\beta) + q G(x/\beta),
\end{split}
\]
which completes the proof.
\end{proof}

\begin{lemma}
\label{lemma:support}
If $F$ is a distribution function that satisfies the functional equation \eqref{eq:functional}, then
\begin{enumerate}[(i)]
\item $F(x) = 1$ for all $x \geq 1$;
\item $F(x) = 0$ for all $x \leq 0$.
\end{enumerate}
\end{lemma}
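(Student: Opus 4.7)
The plan is to iterate the functional equation \eqref{eq:functional}, using that each application rewrites $F$ at a point as a convex combination of $F$ at two new points obtained by the affine maps $T_0(x) = x/\beta$ and $T_1(x) = x/\beta + 1 - 1/\beta = (x-1)/\beta + 1$. Since $0<\beta\le 1/2$, both maps are expanding, and the key geometric observation is that in the coordinate $v = x-1$ they act as $v \mapsto v/\beta$ and $v \mapsto v/\beta + (1-\beta)/\beta$ respectively, both with nonnegative additive part. So iterates starting from $v>0$ grow like $v/\beta^n$, while iterates starting from $v<0$ shrink (i.e.\ become more negative) at the same rate.

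For part (i), I would fix $x>1$ and set $v = x-1>0$. Iterating \eqref{eq:functional} $n$ times expresses $F(x)$ as a convex combination of $F$-values at $2^n$ points, each of which is of the form $1 + v/\beta^n + \text{(nonnegative)}$ and hence $\ge 1 + v/\beta^n$. Monotonicity of $F$ gives $F(x) \ge F(1 + v/\beta^n)$; letting $n\to\infty$ and using $F(\infty)=1$ forces $F(x)=1$ for every $x>1$. The boundary value $F(1)=1$ is then obtained from the right-continuity of the distribution function.

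For part (ii), I would mirror the same strategy: for $x<0$, both $T_0(x) = x/\beta$ and $T_1(x) = (x-1)/\beta + 1$ are at most $x/\beta$, so $n$-fold iteration produces $2^n$ points that are all $\le x/\beta^n \to -\infty$. Monotonicity of $F$ and $F(-\infty)=0$ then give $F(x)=0$ for all $x<0$. The boundary case $x=0$ does not follow from right-continuity (it would require left-continuity), so I would handle it directly from \eqref{eq:functional}: since $1-1/\beta \le -1 < 0$, the already established fact $F\equiv 0$ on $(-\infty,0)$ reduces $F(0)=pF(0)+qF(1-1/\beta)$ to $qF(0)=0$, and $q>0$ yields $F(0)=0$. (Alternatively one can invoke Lemma~\ref{lemma:symmetry}: the function $G(x)=1-F(1-x)$ is nondecreasing with $G(\infty)=1$, so the argument of part (i) applies verbatim to $G$ and then the boundary at $0$ is cleaned up as above.)

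The main obstacle is simply the boundary bookkeeping: the iterative argument yields the conclusion on the open half-lines $x>1$ and $x<0$ without any difficulty, but because $T_1$ fixes $1$ and $T_0$ fixes $0$, the endpoints $x=1$ and $x=0$ cannot be reached by letting $n\to\infty$; they must be handled by right-continuity of $F$ at $x=1$ and by a one-step application of the functional equation together with $q>0$ at $x=0$.
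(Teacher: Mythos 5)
Your argument is correct, and it rests on the same basic mechanism as the paper's proof --- iterate the functional equation \eqref{eq:functional}, note that the iterated points escape to $\pm\infty$ because the maps $x\mapsto x/\beta$ and $x\mapsto x/\beta+1-1/\beta$ are expanding, and combine monotonicity of $F$ with $F(-\infty)=0$, $F(+\infty)=1$ --- but the bookkeeping is organised differently. You expand $F(x)$ over the full binary tree of $2^n$ iterated images and obtain the bounds $F(x)\geq F(1+v/\beta^n)$ for $x=1+v>1$ and $F(x)\leq F(x/\beta^n)$ for $x<0$, which settles the \emph{open} half-lines; the endpoints then need separate treatment, namely right-continuity of the distribution function at $x=1$ and a one-step evaluation of \eqref{eq:functional} at $x=0$ (using $1-1/\beta\leq -1$ and $q>0$), both of which you carry out correctly. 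The paper instead targets the endpoint values directly: it tracks a single diverging orbit ($x_1=1/\beta$, $x_{n+1}=x_n/\beta+1-1/\beta$, and its mirror image for part (ii)) and shows by induction, via a squeeze between the functional equation and monotonicity, that $F(x_n)=F(1)$ for all $n$, so $F(1)=\lim_n F(x_n)=1$ with no appeal to right-continuity and no boundary patching. Your route costs the extra boundary arguments (and your parenthetical alternative via Lemma~\ref{lemma:symmetry} is fine precisely because you do \emph{not} try to use right-continuity for $G(x)=1-F(1-x)$, which is only left-continuous, but instead clean up $x=0$ directly); the paper's route costs the induction with the two-sided squeeze. Both are complete proofs.
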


\begin{proof}
(i) It suffices to show that $F(1)=1$. Note that the sequence $(x_n)$ defined by $x_1 = 1/\beta$ and $x_{n+1} = x_n/\beta+1-1/\beta$ is strictly increasing and unbounded. We claim that $F(x_n) = F(1)$ for all $n \in \mathbb{N}$. We have $F(1)=pF(1/\beta)+qF(1)$, which gives $F(x_1)=F(1)$. If the assertion holds for some $n$, then on the one hand we have
\[
\begin{split}
F(1)
	& = F(x_n) \\
	& = p F(x_n/\beta) + q F(x_n/\beta+1-1/\beta) \\
	& \geq p F(x_n/\beta+1-1/\beta) + q F(x_n/\beta+1-1/\beta) \\
	& = F(x_{n+1}).
\end{split}
\]
On the other hand, $x_n < x_{n+1}$ implies $F(1) = F(x_n) \leq F(x_{n+1})$. Hence, the claim follows by induction. Since $F(x_n) \to 1$ as $n \to \infty$ we have $F(1)=1$.

(ii) The sequence $(y_n)$ given by $y_1 = 1-1/\beta$ and $y_{n+1} = y_n/\beta$ is strictly decreasing and unbounded. Similar reasoning shows that $F(0)  = F(y_n)$ for all $n \in \mathbb{N}$ and hence $F(0) = 0$.
\end{proof}

\begin{lemma}
\label{lemma:unique}
If $F$ and $G$ are distribution functions that both satisfy the functional equation~\eqref{eq:functional}, then $F=G$.
\end{lemma}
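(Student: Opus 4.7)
The plan is to work with the difference $H = F - G$ and show that it must be identically zero. By linearity of the functional equation \eqref{eq:functional}, $H$ satisfies
\[
H(x) = pH(x/\beta) + qH(x/\beta+1-1/\beta),
\]
and by Lemma \ref{lemma:support} applied to both $F$ and $G$ we have $H(x)=0$ for $x \leq 0$ and $x \geq 1$. Moreover, $H$ is bounded on $[0,1]$ (with $|H| \leq 1$).

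The key observation is that the two arguments appearing on the right-hand side are the inverse branches of an expanding map with fixed points at $0$ and $1$, so under the assumption $\beta \leq 1/2$ at most one of the two terms contributes on each piece of $[0,1]$. Specifically, I would split $[0,1]$ into three parts:
\begin{itemize}
\item For $x \in [0,\beta]$ one has $x/\beta \in [0,1]$ while $x/\beta + 1 - 1/\beta \leq 2 - 1/\beta \leq 0$ (using $\beta \leq 1/2$), so the second term vanishes and $H(x) = p\,H(x/\beta)$.
\item For $x \in [1-\beta,1]$ symmetrically $x/\beta \geq 1/\beta - 1 \geq 1$, so the first term vanishes and $H(x) = q\,H(x/\beta + 1 - 1/\beta)$.
\item For $x \in (\beta,1-\beta)$ (nonempty only if $\beta<1/2$) the same arithmetic shows $x/\beta \geq 1$ and $x/\beta + 1 - 1/\beta \leq 0$, so both terms vanish and $H(x) = 0$.
\end{itemize}

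Letting $M = \sup_{x \in [0,1]} |H(x)|$, the three cases together yield
\[
M \leq \max\{p,q\}\, M.
\]
Since $0<p<1$ forces $\max\{p,q\}<1$, this contraction estimate gives $M=0$, hence $H\equiv 0$ and $F=G$.

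I do not expect any serious obstacle: the proof rests entirely on the observation that $\beta \leq 1/2$ makes the two inverse branches of the functional equation cover disjoint regions, which collapses the contraction constant from $p+q=1$ down to $\max\{p,q\}<1$. The only point needing a little care is verifying the inequalities on the boundary intervals, which is elementary from $1/\beta \geq 2$.
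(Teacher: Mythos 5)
Your proof is correct and is essentially the paper's own argument: both use Lemma \ref{lemma:support} to make one of the two terms on the right-hand side vanish on $[0,\beta]$ and $[1-\beta,1]$ (and both on the middle piece), yielding the contraction estimate $\|F-G\|_\infty \leq \max\{p,q\}\,\|F-G\|_\infty$ and hence $F=G$. The paper just states the case analysis more tersely; your boundary arithmetic fills in the same details.
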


\begin{proof}
Lemma \ref{lemma:support} implies that $F(x)=G(x)$ when $x \leq 0$ or $x \geq 1$ and thus
\[
\begin{split}
F(x)-G(x)
	& = \begin{cases}
			p(F(x/\beta)-G(x/\beta))		& \text{if } 0 \leq x \leq \beta, \\
			q(F(x/\beta+1-1/\beta)-G(x/\beta+1-1/\beta))	& \text{if } 1-\beta \leq x \leq 1, \\
			0					& \text{otherwise}.
		\end{cases}
\end{split}
\]
Hence, we have the inequality
\[
\|F-G\|_\infty \leq \max\{p,q\} \|F-G\|_\infty.
\]
Since $\max\{p,q\} < 1$ it follows that $F=G$.
\end{proof}

\begin{proposition}
\label{prop:solution_functional}
There exists a unique distribution function $F_{\beta,p}$ satisfying the functional equation \eqref{eq:functional}, which is given by
\begin{equation}
\label{eq:Fp_formula}
F_{\beta,p}(x)
=
\begin{cases}
0 & \text{if } x \leq 0, \\
x^{\log p / \log\beta}\nu_{\beta,p}(\log x) & \text{if } 0 < x \leq 1, \\
1 & \text{if } x > 1,
\end{cases}
\end{equation}
where $\nu_{\beta,p} : \mathbb{R} \to \mathbb{R}$ is positive, bounded, and has period $|\log \beta|$. In addition, the following properties hold:
\begin{enumerate}[(i)]
\item $F_{\beta,p}(\beta x) = pF_{\beta,p}(x)$ for $0 < x < 1$;
\item $1-F_{\beta,p}(1-x) = F_{\beta,q}(x)$ for all $x \in \mathbb{R}$;
\item $1-F_{\beta,p}$ is not regularly varying at $x=1$.
\end{enumerate}
\end{proposition}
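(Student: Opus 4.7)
The plan is to construct $F_{\beta,p}$ as the stationary distribution of \eqref{eq:AR_proc}, derive the representation \eqref{eq:Fp_formula} from the multiplicative scaling in (i), and then deduce (ii) and (iii) from that representation together with Lemma \ref{lemma:symmetry}.

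For existence, iterating \eqref{eq:AR_proc} yields $X_k=\beta^k X_0+\sum_{j=1}^k\beta^{k-j}\varepsilon_j$; since $\beta<1$ and each $\varepsilon_j\in[0,1-\beta]$, this converges in distribution to $Y_\infty=\sum_{j=1}^\infty\beta^{j-1}\tilde{\varepsilon}_j\in[0,1]$ where $\tilde{\varepsilon}_j$ are i.i.d.\ copies of $\varepsilon_1$. Conditioning on $\tilde{\varepsilon}_1$ verifies that the CDF of $Y_\infty$ solves \eqref{eq:functional}, and uniqueness is Lemma \ref{lemma:unique}. Property (i) follows by substituting $\beta x$ into \eqref{eq:functional}: for $0<x\leq 1$ and $\beta\leq 1/2$ the argument $x+1-1/\beta$ is non-positive, so Lemma \ref{lemma:support} kills the second term. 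The representation \eqref{eq:Fp_formula} then emerges by setting $\nu_{\beta,p}(t):=F_{\beta,p}(e^t)e^{-t\log p/\log\beta}$ on $t\leq 0$: (i) becomes $\nu_{\beta,p}(t+\log\beta)=\nu_{\beta,p}(t)$, so $\nu_{\beta,p}$ extends $|\log\beta|$-periodically to $\mathbb{R}$; on the fundamental domain $x\in[\beta,1]$ both $F_{\beta,p}(x)$ and $x^{\log p/\log\beta}$ lie in $[p,1]$, giving $\nu_{\beta,p}\in[p,1/p]$.

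For (ii), $G(x)=1-F_{\beta,p}(1-x)$ satisfies \eqref{eq:functional} with $p,q$ swapped by Lemma \ref{lemma:symmetry}, and Lemma \ref{lemma:unique} (with swapped weights) identifies $G=F_{\beta,q}$ provided $G$ is a genuine CDF. The remaining issue is right-continuity of $G$, i.e.\ continuity of $F_{\beta,p}$, which I would prove by a self-similar amplification argument: a jump $\Delta:=F_{\beta,p}(y)-F_{\beta,p}(y^-)>0$ at $y\in(0,1)$ satisfies $\Delta=p\Delta_1+q\Delta_2$ for the jumps at the preimages $y/\beta$ and $y/\beta+1-1/\beta$. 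Lemma \ref{lemma:support} places both preimages outside $[0,1]$ when $y\in(\beta,1-\beta)$ and exactly one outside when $y\in(0,\beta]\cup[1-\beta,1)$, so any actual jump is relocated upstream with size multiplied by $1/p$ or $1/q$; iterating produces unboundedly large jumps in a CDF, a contradiction.

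For (iii), the representation combined with (ii) gives $1-F_{\beta,p}(1-x)=x^{\log q/\log\beta}\nu_{\beta,q}(\log x)$. Regular variation of $1-F_{\beta,p}$ at $1^-$ is equivalent to $\nu_{\beta,q}(\log x)$ being slowly varying at $0^+$, and a bounded positive periodic function is slowly varying only if it is constant (every positive real would then become a period). So $F_{\beta,q}(x)=x^{\log q/\log\beta}$ on $(0,1]$; applying (ii) once more yields $F_{\beta,p}(x)=1-(1-x)^{\log q/\log\beta}\sim(\log q/\log\beta)\,x$ near $x=0$, whereas its own representation gives $F_{\beta,p}(x)\asymp x^{\log p/\log\beta}$, forcing $p=\beta$. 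Repeating the argument makes $\nu_{\beta,p}$ constant and collapses $F_{\beta,p}(x)$ to $x$ on $(0,1]$; substituting $F(x)=x$ into \eqref{eq:functional} at $x\in(\beta,1-\beta)$ then produces $x=p$ on that interval, which is impossible unless $\beta=1/2$, and combined with $p=\beta$ this yields the excluded parameters $p=q=\beta=1/2$. The most delicate step is the continuity argument in (ii): the dichotomy across the boundary points $\beta$ and $1-\beta$ must be checked carefully so that the jump amplification truly applies at every iteration.
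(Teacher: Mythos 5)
Your proposal is correct in substance but follows a genuinely different route from the paper's. For existence, you solve the distributional fixed-point problem via the a.s.\ convergent series $Y_\infty=\sum_{j\geq1}\beta^{j-1}\tilde\varepsilon_j$ and condition on $\tilde\varepsilon_1$, whereas the paper iterates the CDF recursion \eqref{eq:Fk_recursion} from the uniform distribution and uses the contraction estimate $\|F_{k+1}-F_k\|_\infty\leq\max\{p,q\}\|F_k-F_{k-1}\|_\infty$; your route is probabilistically more transparent, but the paper's buys continuity of $F_{\beta,p}$ for free (uniform limit of continuous functions, later quantified in Lemma \ref{lemma:continuity}), which is exactly why you must add the jump-amplification argument before invoking Lemma \ref{lemma:unique} for property (ii). That amplification argument is sound (jumps in $(\beta,1-\beta)$ and at $\beta$, $1-\beta$ vanish outright; jumps in $(0,\beta)\cup(1-\beta,1)$ relocate with factor $1/p$ or $1/q>1$, contradicting boundedness), but note you also need to exclude an atom at $x=1$, since right-continuity of $G(x)=1-F_{\beta,p}(1-x)$ at $0$ requires $F_{\beta,p}(1^-)=1$; this follows from the same computation, as \eqref{eq:functional} gives $(\text{jump at }1)=q\cdot(\text{jump at }1)$. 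Your derivation of \eqref{eq:Fp_formula} directly from the scaling relation (i), with the explicit bounds $\nu_{\beta,p}\in[p,1/p]$, is cleaner than the paper's passage through the approximants $F_k$. Finally, for (iii) the paper simply asserts that the ratio $x^{\log q/\log\beta}\nu_{\beta,q}(\log x+\log h)/\nu_{\beta,q}(\log h)$ has no limit as $h\to0$, which tacitly uses that $\nu_{\beta,q}$ is non-constant; your longer argument (slow variation of a positive periodic factor forces it to be constant, then the two representations force $p=\beta$, then $F_{\beta,p}(x)=x$, then \eqref{eq:functional} forces $\beta=1/2$, landing in the excluded case) actually closes that loophole, at the cost of the somewhat terse ``repeating the argument'' step, which does flesh out correctly because $F_{\beta,q}(x)=x^{\log q/\log\beta}$ makes $F_{\beta,p}$ regularly varying at $0^+$ of index $1$.
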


\begin{proof}
Consider the sequence of distribution functions $(F_k)$ given by the recursion \eqref{eq:Fk_recursion} where the initial distribution is given by
\[
F_0(x)
=
\begin{cases}
0 & \text{if } x < 0, \\
x & \text{if } 0 \leq x \leq 1, \\
1 & \text{if } x > 1.
\end{cases}
\]
An induction argument shows that all functions $F_k$ satisfy $F_k(x) = 0$ for $x \leq 0$ and $F_k(x) = 1$ for $x \geq 1$. An argument similar to the one in the proof of Lemma \ref{lemma:unique} gives
\[
\|F_{k+1}-F_{k}\|_\infty \leq \max\{p,q\} \|F_{k}-F_{k-1}\|_\infty.
\]
In particular, the sequence $(F_{k})$ converges uniformly to a distribution function $F_{\beta,p}$. By taking the limit $k\to\infty$ in equation \eqref{eq:Fk_recursion} it follows that $F_{\beta,p}$ satisfies the functional equation~\eqref{eq:functional}. The uniqueness of $F_{\beta,p}$ follows from Lemma \ref{lemma:unique}.

For fixed $-\infty < x \leq 0$ define
\[
\nu_{\beta,p}(x) = \lim_{k\to\infty} (e^{x})^{-\log p/\log\beta} F_{k}(e^x).
\]
Clearly, this defines a positive and bounded function $\nu_{\beta,p} : (-\infty,0] \to \mathbb{R}$. For all integers $k \geq 0$ equation \eqref{eq:Fk_recursion} gives
\[
\begin{split}
F_{k}(e^{x+\log \beta}) 
	& = F_{k}(\beta e^{x}) \\
	& = p F_{k-1}(e^{x}) + q F_{k-1}(e^{x}+1-1/\beta) \\
	& = p F_{k-1}(e^{x}),
\end{split}
\]
where we have used that $F_{k-1}(e^{x}+1-1/\beta)=0$ since $e^{x}+1-1/\beta \leq 0$. This gives
\[
(e^{x+\log \beta})^{-\log p/\log\beta}F_k(e^{x+\log\beta})
	= \frac{(e^{x})^{-\log p/\log\beta}}{p}F_k(e^{x+\log\beta})
	= (e^x)^{-\log p/\log\beta} F_{k-1}(e^{x}).
\]
Taking the limit $k\to\infty$ on both sides gives $\nu_{\beta,p}(x+\log\beta)=\nu_{\beta,p}(x)$. We can extend $\nu_{\beta,p}$ to the whole real line by periodicity.

Property (i) follows directly from equation~\eqref{eq:Fp_formula}.

By construction, the function $F_{\beta,q}$ satisfies the functional equation \eqref{eq:functional} in which the roles of $p$ and $q$ are interchanged, but by Lemma \ref{lemma:symmetry} the same holds for $1-F_{\beta,p}(1-x)$. Therefore, property (ii) follows by Lemma \ref{lemma:unique}.

Finally, property (iii) follows from
\[
\frac{1-F_{\beta,p}(1-xh)}{1-F_{\beta,p}(1-h)}
	= \frac{F_{\beta,q}(xh)}{F_{\beta,q}(h)}
	= x^{\log q/\log\beta} \frac{\nu_{\beta,q}(\log x + \log h)}{\nu_{\beta,q}(\log h)},
\]
which does not have a limit as $h\to 0$ for all $x > 0$.
\end{proof}

Figures \ref{fig:example1}--\ref{fig:example3} show plots of the functions $F_{\beta,p}$ and $\nu_{\beta,p}$ for different choices of the parameters $\beta$ and $p$. The proof of Proposition \ref{prop:solution_functional} also shows that the distribution function $F_{\beta,p}$ is continuous. A more precise quantitative formulation of this fact is given in the following lemma which will be needed in Section \ref{sec:dependent}.

\begin{lemma}
\label{lemma:continuity}
Let $F_{\beta,p}$ given by \eqref{eq:Fp_formula}.
For all integers $k \geq 0$ the following implication holds:
\[
|x-y| \leq \beta^k
\quad\Rightarrow\quad
|F_{\beta,p}(x)-F_{\beta,p}(y)| \leq \max\{p,q\}^k.
\]
\end{lemma}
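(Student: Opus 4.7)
My plan is to proceed by induction on $k$. The base case $k = 0$ is trivial since $F_{\beta,p}$ takes values in $[0,1]$.

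For the inductive step, I would reduce to the case $0 \leq x \leq y \leq 1$, using that $F_{\beta,p}$ is constant outside $[0,1]$, and distinguish cases based on the partition $[0,1] = [0,\beta] \cup [\beta, 1-\beta] \cup [1-\beta, 1]$. From the functional equation \eqref{eq:functional} and the hypothesis $\beta \leq 1/2$, one derives three local identities:
\[
F_{\beta,p}(x) = p F_{\beta,p}(x/\beta) \text{ on } [0,\beta], \quad F_{\beta,p} \equiv p \text{ on } [\beta, 1-\beta], \quad F_{\beta,p}(x) = p + q F_{\beta,p}\bigl((x-1+\beta)/\beta\bigr) \text{ on } [1-\beta, 1].
\]
If $x, y$ lie in the same outer subinterval, or if one lies on the plateau $[\beta, 1-\beta]$ while the other is adjacent, applying the relevant identity rescales the arguments by $\beta$ (so they differ by at most $\beta^k$) and pulls out a factor $p$ or $q$ in front; the inductive hypothesis then yields a bound of $pM^k$ or $qM^k$, each at most $M^{k+1}$.

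The main obstacle is the case $x \in [0,\beta]$, $y \in [1-\beta, 1]$, which can only occur when $\beta^{k+1} \geq 1 - 2\beta$. Combining both outer scaling identities and property (ii) of Proposition \ref{prop:solution_functional}, I would write
\[
F_{\beta,p}(y) - F_{\beta,p}(x) = p F_{\beta,q}(a) + q F_{\beta,p}(b), \qquad a := 1 - x/\beta, \quad b := (y - 1 + \beta)/\beta,
\]
with $a, b \in [0,1]$ and $a + b = (y-x)/\beta - (1/\beta - 2) \leq \beta^k$. The analogous scaling identity $F_{\beta,q}(\beta z) = q F_{\beta,q}(z)$ follows from property (i) applied with the roles of $p$ and $q$ swapped, so iterating these identities $k$ times produces $F_{\beta,p}(b) = p^k F_{\beta,p}(B)$ and $F_{\beta,q}(a) = q^k F_{\beta,q}(A)$, where $A := a/\beta^k$, $B := b/\beta^k \in [0,1]$ satisfy $A + B \leq 1$. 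A final use of property (ii) gives $F_{\beta,p}(B) \leq 1 - F_{\beta,q}(A)$, so the quantity is bounded above by $qp^k + (pq^k - qp^k) F_{\beta,q}(A)$; as a linear function of $F_{\beta,q}(A) \in [0,1]$, its extreme values $pq^k$ and $qp^k$ are each at most $M \cdot M^k = M^{k+1}$.
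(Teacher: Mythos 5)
Your proof is correct, but it follows a genuinely different route from the paper. The paper argues directly, without induction: assuming $y\le x\le y+\beta^k$, it bounds the increment over any interval of length $\beta^k$ by the increment over the extreme interval lying at the ``heavy'' end of $[0,1]$ --- namely $F_{\beta,p}(1)-F_{\beta,p}(1-\beta^k)=F_{\beta,q}(\beta^k)=q^k$ when $p\le q$, and $F_{\beta,p}(\beta^k)-F_{\beta,p}(0)=p^k$ when $p>q$ --- using properties (i) and (ii) of Proposition \ref{prop:solution_functional} to evaluate these increments exactly. That argument is shorter, but it rests on the asserted (and not further justified) fact that the endpoint interval maximizes the increment among all intervals of the given length. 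Your induction on $k$ with the case analysis over $[0,\beta]\cup[\beta,1-\beta]\cup[1-\beta,1]$ avoids that assertion altogether: the same-piece and plateau-adjacent cases rescale by $\beta$ and pull out a factor $p$ or $q$, and your treatment of the straddling case via $F_{\beta,p}(y)-F_{\beta,p}(x)=pF_{\beta,q}(a)+qF_{\beta,p}(b)$ with $a+b\le\beta^k$, the scaling identities, and property (ii) is sound (I checked $a,b\in[0,1]$, $A+B\le 1$, and the linear-in-$F_{\beta,q}(A)$ endpoint bound $\max\{pq^k,qp^k\}\le\max\{p,q\}^{k+1}$). So your version is longer but fully self-contained, in effect proving the extremal-interval claim the paper takes for granted, and it even yields the slightly sharper bound $\max\{pq^k,qp^k\}$ for intervals crossing the central gap; the paper's version buys brevity and an exact identification of where the worst-case increment occurs.
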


\begin{proof}
If $y \leq x \leq y + \beta^k$, then
\[
0 \leq F_{\beta,p}(x) - F_{\beta,p}(y) \leq F_{\beta,p}(y+\beta^k) - F_{\beta,p}(y).
\]
For $p \leq q$ we have
\[
F_{\beta,p}(y+\beta^k) - F_{\beta,p}(y) \leq F_{\beta,p}(1) - F_{\beta,p}(1-\beta^k) = F_{\beta,q}(\beta^k) = q^k,
\]
and for $p > q$ we have
\[
F_{\beta,p}(y+\beta^k) - F_{\beta,p}(y) \leq F_{\beta,p}(\beta^k) - F_{\beta,p}(0) = p^k.
\]
This completes the proof.
\end{proof}

The following result shows that $F_{\beta,p}$ is in the domain of attraction of a max-semistable distribution:

\begin{proposition}
With the sequences $a_n = 1/\beta^n$, $b_n=1$, and $k_n = \lfloor 1/q^n\rfloor$, we have for all $x < 0$ that
\begin{equation}
\label{eq:F_MSSDA}
\lim_{n\to\infty} F_{\beta,p}(x/a_n + b_n)^{k_n}
	= \exp\{-(-x)^{\log q/\log\beta}\nu_{\beta,q}(\log(-x))\}.
\end{equation}
\end{proposition}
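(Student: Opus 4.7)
The plan is to reduce the problem to the local behavior of $F_{\beta,q}$ near the origin, where the multiplicative scaling relation from Proposition~\ref{prop:solution_functional}(i) can be iterated cleanly and the periodic structure of $\nu_{\beta,q}$ produces the claimed limit. First I would simplify the argument as $x/a_n+b_n = 1+\beta^n x$. Since $x<0$ and $0<\beta\le 1/2$, for all sufficiently large $n$ this quantity lies in $(0,1)$, so the closed form from \eqref{eq:Fp_formula} applies. Using Proposition~\ref{prop:solution_functional}(ii) I would translate to the lower tail of the reflected distribution:
\[
1-F_{\beta,p}(1+\beta^n x) \;=\; F_{\beta,q}(-\beta^n x) \;=\; F_{\beta,q}\bigl(\beta^n(-x)\bigr).
\]

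Next I would substitute the explicit formula for $F_{\beta,q}$. For $n$ large enough that $\beta^n(-x)\le 1$, this yields
\[
F_{\beta,q}\bigl(\beta^n(-x)\bigr)
  = \bigl(\beta^n(-x)\bigr)^{\log q/\log\beta}\,\nu_{\beta,q}\bigl(n\log\beta+\log(-x)\bigr).
\]
Two elementary simplifications finish the computation of the tail: the identity $\beta^{\log q/\log\beta}=q$ turns the factor $\bigl(\beta^n\bigr)^{\log q/\log\beta}$ into $q^n$, and the periodicity $\nu_{\beta,q}(t+\log\beta)=\nu_{\beta,q}(t)$ (period $|\log\beta|$) lets us absorb the $n\log\beta$ shift. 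Setting $A(x):=(-x)^{\log q/\log\beta}\nu_{\beta,q}(\log(-x))$, we obtain $1-F_{\beta,p}(1+\beta^n x)=q^n A(x)$.

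Finally I would take logarithms and expand:
\[
\log F_{\beta,p}(1+\beta^n x)^{k_n}
  = k_n\log\bigl(1-q^n A(x)\bigr)
  = -k_n q^n A(x) + O\bigl(k_n q^{2n}\bigr).
\]
Since $k_n=\lfloor q^{-n}\rfloor$ satisfies $k_n q^n\to 1$ and $k_n q^{2n}\to 0$, the right-hand side tends to $-A(x)$, and exponentiating gives \eqref{eq:F_MSSDA}. I do not anticipate any serious obstacle: the only conceptual point is that Proposition~\ref{prop:solution_functional}(ii) converts the extreme-value problem at the upper endpoint $x=1$, where $1-F_{\beta,p}$ fails to be regularly varying by property~(iii), into a problem at the origin where the self-similar scaling of $F_{\beta,q}$ exposes the log-periodic modulation characteristic of max-semistable domains of attraction.
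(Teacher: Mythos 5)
Your proposal is correct and follows essentially the same route as the paper: rewrite $x/a_n+b_n=1+\beta^n x$, use Proposition~\ref{prop:solution_functional}(ii) together with the explicit form \eqref{eq:Fp_formula} and the $|\log\beta|$-periodicity of $\nu_{\beta,q}$ to get $1-F_{\beta,p}(1+\beta^n x)=q^n(-x)^{\log q/\log\beta}\nu_{\beta,q}(\log(-x))$, and then pass to the limit using $k_n q^n\to 1$. The only difference is that you spell out the final limit via a logarithmic expansion, which the paper leaves implicit; this is a fine, slightly more explicit version of the same argument.
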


\begin{proof}
Fix $x < 0$ and let $n \in \mathbb{N}$ be sufficiently large so that $0 < -\beta^n x < 1$. Proposition \ref{prop:solution_functional} gives
\[
\begin{split}
F_{\beta,p}(\beta^n x+1)
	& = 1 - F_{\beta,q}(-\beta^n x) \\
	& = 1 - (-\beta^n x)^{\log q/\log\beta} \nu_{\beta,q}(\log(-\beta^n x)) \\
	& = 1 - q^n (-x)^{\log q/\log\beta} \nu_{\beta,q}(\log(-x)).
\end{split}
\]
Since $k_n \to \infty$ and $k_n q^n \to 1$ the result follows.
\end{proof}

Up to a change of scale and location the right-hand side of \eqref{eq:F_MSSDA} indeed of the form~\eqref{eq:maxsemistable_family}. Domains of attraction for max-semistable laws can also be understood in terms of regularity of the tails. Necessary and sufficient conditions in terms of comparisons with regularly varying distributions are formulated in \cite{Megyesi:02}. A very different characterization of max-semistable laws was derived in \cite{CantoHaanTemido:2002}. The latter approach has the advantage that there is no need to know (or guess) a regularly varying distribution function to compare with and also enables the development of estimators for the log-periodic component \cite{CantoDiasTemido:2009}.

\begin{figure}[h]
\centering
\includegraphics{./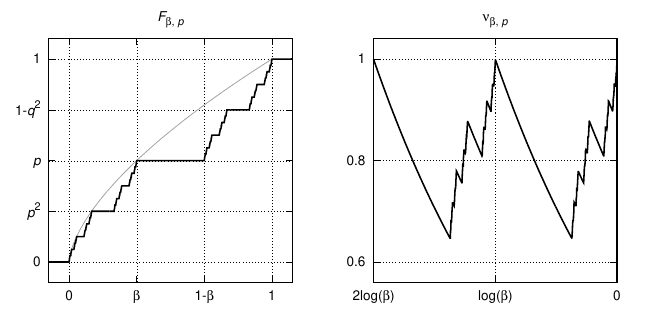}
\caption{Graphs of the distribution function $F_{\beta,p}$, the curve $y=x^{\log p/\log\beta}$ (left panel), and the periodic function $\nu_{\beta,p}$ (right panel). The chosen parameters are $\beta = 1/3$ and $p = 1/2$ in which case $F_{\beta,p}$ is the Cantor function. For other choices of $0 \leq \beta < 1/2$ and $0 < p < 1$ the graphs are qualitatively similar. The parameters $\beta$ and $p$ respectively control the width and height of the plateau's on which $F_{\beta,p}$ is constant.}
\label{fig:example1}
\end{figure}

\begin{figure}[h]
\centering
\includegraphics{./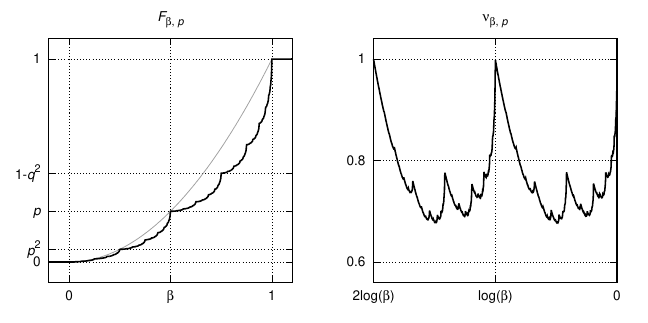}
\caption{As Figure \ref{fig:example1}, but for the parameters $\beta = 1/2$ and $p = 1/4$. For $0 < p < 1/2$ the graphs are qualitatively similar. If $\beta=p=1/2$, then $\nu_{\beta,p} \equiv 1$ and $F_{\beta,p}$ is the uniform distribution on $[0,1]$.}
\label{fig:example2}
\end{figure}

\begin{figure}[h]
\centering
\includegraphics{./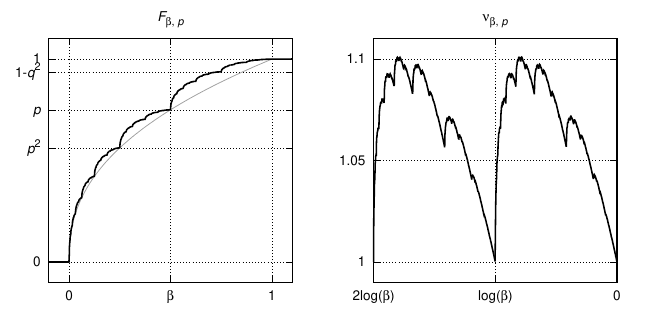}
\caption{As Figure \ref{fig:example1}, but for the parameters $\beta = 1/2$ and $p = 3/4$. For $1/2 < p < 1$ the graphs are qualitatively similar.}
\label{fig:example3}
\end{figure}

\clearpage


\newpage

\section{The extreme value law}
\label{sec:dependent}

In this section we derive a max-semistable extreme value law for the stationary process~\eqref{eq:AR_proc} with the marginal distribution function $F_{\beta,p}$ given by Proposition~\ref{prop:solution_functional}. The main result is the following:
\begin{theorem}
\label{thm:main_result}
With the sequences $a_n = 1/\beta^n$, $b_n=1$, and $k_n = \lfloor 1/q^n\rfloor$ the autoregressive process \eqref{eq:AR_proc} with marginal distribution \eqref{eq:Fp_formula} satisfies the following extreme value law for all $x < 0$:
\begin{equation}
\label{eq:main_result}
\lim_{n\to\infty} \prob(a_n(M_{k_n}-b_n) \leq x) = \exp\{-p(-x)^{\log q/\log\beta}\nu_{\beta,q}(\log(-x))\}.
\end{equation}
\end{theorem}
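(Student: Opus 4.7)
The strategy is to identify an extremal index $\theta = p$ for $(X_k)$ and invoke the extension of the extremal-type theorem to max-semistable laws from \cite{TemidoCanto:2003}, so that the limit is the $p$-th power of the i.i.d.\ limit established in the previous proposition. The key structural observation is that, for $n$ large enough that $\beta < u_n := x/a_n + b_n$ (equivalently $\beta^n|x| < 1-\beta$), the recursion \eqref{eq:AR_proc} forces
\[
\{X_{k+1} > u_n\} = \{\varepsilon_{k+1} = 1-\beta\} \cap \{X_k > u_{n-1}\},
\]
since $\varepsilon_{k+1}=0$ gives $X_{k+1} = \beta X_k \leq \beta < u_n$. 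Because $u_{n-1} < u_n$, the event $\{X_k > u_n\}$ already implies $\{X_k > u_{n-1}\}$; thus, on $\{X_k > u_n\}$, a continued exceedance $X_{k+1} > u_n$ happens iff $\varepsilon_{k+1} = 1-\beta$, and iterating shows that runs of consecutive exceedances of $u_n$ are geometric with parameter $p$.

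From this cluster structure the extremal index falls out directly. Conditional on $X_1 > u_n$, the relation above gives $X_2 \leq u_n$ iff $\varepsilon_2 = 0$; once $\varepsilon_2 = 0$, we have $X_2 = \beta X_1 \leq \beta$, and the worst case of a full run $\varepsilon_3 = \cdots = \varepsilon_j = 1-\beta$ afterwards only produces $X_j \leq 1 - \beta^{j-2}(1-\beta)$, which stays below $u_n = 1 - \beta^n|x|$ as long as $j - 2 \leq n + \log(|x|/(1-\beta))/\log\beta$. Choosing $r_n \to \infty$ with $r_n = o(n)$ therefore gives
\[
\theta = \lim_{n\to\infty}\prob(X_2 \leq u_n, \ldots, X_{r_n} \leq u_n \mid X_1 > u_n) = \prob(\varepsilon_2 = 0) = p.
\]

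To promote this local statement to the global limit I would verify the long-range dependence condition $D(u_n)$ of \cite{TemidoCanto:2003} using the geometric contraction of the chain. Coupling two realizations driven by the same innovations gives $|X_{k+m} - \tilde X_{k+m}| \leq \beta^m$, and Lemma~\ref{lemma:continuity} then bounds the effect on level-crossing events by $\max\{p,q\}^m$. A separator of length $\ell_n \to \infty$ with $\ell_n = o(k_n)$ makes the resulting mixing error negligible compared with $k_n q^n = O(1)$. Combining $\theta = p$, the condition $D(u_n)$, and the previous proposition via the max-semistable extremal-type theorem yields the claimed limit
\[
\exp\{-p(-x)^{\log q/\log\beta}\nu_{\beta,q}(\log(-x))\}.
\]
The principal technical obstacle is the careful verification of $D(u_n)$ within the max-semistable framework; the identification of $\theta = p$ is essentially immediate from the AR recursion.
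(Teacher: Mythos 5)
Your structural observations are correct and match the mechanism the paper exploits: the identity $\{X_{k+1}>u_n\}=\{\varepsilon_{k+1}=1-\beta\}\cap\{X_k>u_{n-1}\}$ is exactly what drives the recursion for $P_{s,i}=\prob(M_s\le u_n,\,X_{s-1}\le u_{n-i})$ in the paper, and the value $\theta=p$ is right. The genuine gap is the step you declare "essentially immediate": converting the local run computation $\prob(X_2\le u_n,\dots,X_{r_n}\le u_n\mid X_1>u_n)\to p$ into the global statement $\prob(M_{k_n}\le u_n)\to G^p$. This requires an O'Brien/Chernick--Hsing--McCormick type blocks--runs theorem along the geometrically growing subsequence $k_n$, and \cite{TemidoCanto:2003} does not supply such a tool: it gives the extremal-types theorem under a $D(u_n)$-type condition, the notion of extremal index, and the "same type" criterion quoted after Theorem~\ref{thm:main_result}, but not a result that lets you read off $\theta$ from a conditional run probability. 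Moreover, your run length $r_n=o(n)$ is at the wrong scale: in any blocks--runs argument the runs must be compared with blocks of length comparable to $k_n/m_n$, which grows geometrically in $n$, whereas your deterministic "cannot return above $u_n$" argument only covers about $n$ steps after the cluster ends; controlling possible returns over geometrically many further steps (and doing so \emph{conditionally} on the rare event $X_1>u_n$, where crude bounds of the form $\prob(B\mid A)\le\prob(B)/\prob(A)$ are useless) needs an additional estimate, e.g.\ via the decomposition $X_j=\beta^{j-1}X_1+Y_{j-1}$ together with Lemma~\ref{lemma:continuity}. Supplying this missing theorem and these estimates is essentially the entire content of the paper's proof (Lemmas~\ref{lemma:P_recursion}--\ref{lemma:key_lemma}), so the proposal as written defers the main difficulty to a reference that does not contain it.

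The mixing part of your plan is also underestimated quantitatively, although it is salvageable. With the one-sided coupling $0\le X_j-\tilde X_j\le\beta^{\ell_n}$ and a union bound over a block, the decoupling error per block boundary is of order $(\text{block length})\cdot\min\{\max\{p,q\}^{\ell_n},\,q^n\}$; since the total number of indices is $k_n\approx q^{-n}$, taking merely $\ell_n\to\infty$, $\ell_n=o(k_n)$ gives an overall error of order $k_nq^n=O(1)$, not $o(1)$. You need $\ell_n\ge Cn$ for a suitable constant $C$ (which is compatible with $\ell_n=o(k_n)$), and the estimate must be organized so the errors over all $m_n$ block boundaries still vanish. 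This is precisely where the paper works hardest: because its blocks have short length $j_n=O(n)$ (chosen so that $\prob(M_{j_n+1}\le u_n)$ can be computed exactly), the crude union bound is insufficient, and the paper instead uses associated random variables (Appendix~\ref{sec:associated}) to drop absolute values and a refined increment bound $F_{\beta,p}(u_{n-i}+\beta^{j_n-\ell_n-1})-F_{\beta,p}(u_{n-i})\le q^{j_n-i}\max\{p,q\}^{i-\ell_n-1}$ obtained from the functional equation, yielding a per-block error $O(q^{j_n})=O(q^n)$. In short: your route (general max-semistable extremal-index machinery plus a runs computation) is a legitimately different strategy from the paper's self-contained Chernick-style argument, but as proposed it rests on an unavailable general theorem and on mixing/run-scale choices that do not yet close; filling those holes would amount to reproving most of what the paper proves directly.
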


Comparing the distributions \eqref{eq:F_MSSDA} and \eqref{eq:main_result} shows the appearance of an extremal index which is given by $\theta=p$. The usual interpretation of $\theta$ as the reciprocal of the mean cluster size of extremes implies that for smaller $p$ the extremes on average form larger clusters, which is intuitively clear from \eqref{eq:AR_proc}.

Although the distributions \eqref{eq:F_MSSDA} and \eqref{eq:main_result} share the same tail index $\xi$ and function $\nu$, they are not necessarily of the same type in the sense that the distributions can be obtained from each other by means of an affine rescaling of the variable~$x$. In fact, according to \cite[Corollary of Theorem 8]{TemidoCanto:2003} the distributions \eqref{eq:F_MSSDA} and \eqref{eq:main_result} are of the same type if and only if there exists exists an integer $m>0$ such that $\theta = c^{-m}$, where from \eqref{eq:Megyesi:1.2} we have $c=1/q$. This is the case if and only if $1-q=q^m$ for some integer $m>0$. The latter equation has precisely one solution $q$ for each $m$.

The proof of Theorem \ref{thm:main_result} borrows from \cite{Chernick:1981}, making appropriate adaptations where needed. Throughout this section we make use of the following notation. For a fixed $x < 0$ we consider the following sequence of levels:
\[
u_n = 1 + \beta^n x.
\]
In addition, let
\[
j_n	= \max\{i \in \mathbb{Z} \,:\, u_{n-i} > 0\}.
\]
For simplicity the dependence on $x$ is suppressed in the notation. We only consider values of $n$ sufficiently large so that $0 < u_n < 1$ and $j_n \geq 1$. For all $i=0,\dots,j_n$ we have
\[
F_{\beta,p}(u_{n-i}) = 1-q^{n-i}\psi_{\beta,q}(x)
\quad\text{where}\quad
\psi_{\beta,q}(x) = (-x)^{-\log q/\log \beta}\nu_{\beta,q}(\log(-x)).
\]
Further, observe that $u_{n-i}$ is decreasing in $i$ and $j_n = O(n)$.

Next, consider the following quantities:
\[
P_{s,i}	= \prob(M_s \leq u_n, X_{s-1} \leq u_{n-i}),
\]
for which we have the special cases
\[
P_{1,i} = \prob(X_0 \leq u_{n-i})
\quad\text{and}\quad
P_{s,0} = \prob(M_s \leq u_n).
\]
Again, for simplicity the dependence on both $x$ and $n$ is suppressed in the notation.

\begin{lemma}
\label{lemma:P_recursion}
For all $s \geq 2$ and $0 \leq i \leq j_n-1$ the following recursion holds:
\[
P_{s,i} = p P_{s-1,0} + q P_{s-1,i+1}.
\]
Moreover, for all $2 \leq s \leq j_n+1$ we have:
\[
P_{s,0} = \sum_{i=0}^{s-2} pq^i P_{1,i} + q^{s-1} P_{1,s-1}.
\]
\end{lemma}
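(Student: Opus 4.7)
The plan is to establish the two assertions separately: the recursion by conditioning on the innovation $\varepsilon_{s-1}$, which is independent of $(X_0,\dots,X_{s-2})$, and the closed form by iterating the recursion in the guise of an induction on $s$ with a slightly strengthened statement.

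For the recursion I would split on the two possible values of $\varepsilon_{s-1}$. When $\varepsilon_{s-1}=0$ (probability $p$), we have $X_{s-1}=\beta X_{s-2}\in[0,\beta]$. The key observation is that $u_{n-j_n+1}>1-\beta$, because $\beta^{n-j_n+1}|x|=\beta\cdot\beta^{n-j_n}|x|<\beta$ by the defining property $u_{n-j_n}>0$; combined with $\beta\le 1/2$ this yields $u_{n-i}\ge u_{n-j_n+1}>1-\beta\ge\beta\ge X_{s-1}$ for every $0\le i\le j_n-1$. Hence both constraints $X_{s-1}\le u_n$ and $X_{s-1}\le u_{n-i}$ are automatic, so the joint event collapses to $\{M_{s-1}\le u_n\}$ and contributes $p\,P_{s-1,0}$. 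When $\varepsilon_{s-1}=1-\beta$ (probability $q$), a direct algebraic manipulation using $(u_{n-i}-1+\beta)/\beta=u_{n-i-1}$ rewrites $X_{s-1}\le u_{n-i}$ as $X_{s-2}\le u_{n-i-1}$, and monotonicity of $u_m$ in $m$ then gives $X_{s-1}\le u_n$ for free. The reduced event is precisely the one defining $P_{s-1,i+1}$, contributing $q\,P_{s-1,i+1}$. Summing the two cases yields the recursion.

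For the closed form I would prove, by induction on $s$, the more general identity
\[
P_{s,i}=\sum_{k=0}^{s-2}pq^k P_{1,k}+q^{s-1}P_{1,s-1+i},
\]
valid for $s\ge 1$ and $0\le i$ with $s-1+i\le j_n$. The base case $s=1$ is trivial (empty sum). In the inductive step the recursion gives $P_{s,i}=pP_{s-1,0}+qP_{s-1,i+1}$; substituting the induction hypothesis into each of $P_{s-1,0}$ and $P_{s-1,i+1}$ produces two copies of $\sum_{k=0}^{s-3}pq^k P_{1,k}$ that combine via $p+q=1$, together with the boundary contributions $pq^{s-2}P_{1,s-2}$ (which merges into the sum to extend it up to $k=s-2$) and $q^{s-1}P_{1,s-1+i}$. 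Specialising to $i=0$ gives the stated formula; the restriction $s\le j_n+1$ is exactly what is required for $P_{1,s-1}$ to correspond to a positive level.

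The main obstacle is not any single deep step but the careful bookkeeping in the $\varepsilon_{s-1}=0$ case, where the hypothesis $\beta\le 1/2$ enters essentially (and only slightly hidden) to guarantee that the whole range of levels $\{u_{n-i}:0\le i\le j_n-1\}$ stays above $\beta$. Once this is in place the second case reduces to a single algebraic identity, and the closed form is a routine consequence of iterating the recursion in the inductive framework above.
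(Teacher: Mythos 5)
Your proof is correct and takes essentially the same route as the paper: condition on $\varepsilon_{s-1}$, use the identity $u_{n-i}/\beta+1-1/\beta=u_{n-(i+1)}$ in the $\varepsilon_{s-1}=1-\beta$ branch, and in the $\varepsilon_{s-1}=0$ branch drop the constraint on $X_{s-1}$ via a level bound (your $u_{n-i}>1-\beta\ge\beta$ is equivalent to the paper's $u_{n-i}/\beta>1$; both rest on $u_{n-(i+1)}>0$ and $\beta\le 1/2$). Your strengthened induction for $P_{s,i}$ is just an explicit formalization of the paper's ``repeated application of the recursion,'' and it checks out.
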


\begin{proof}
Since $u_{n-i} \leq u_n$ for all $i \geq 0$ we have
\[
\begin{split}
P_{s,i}
	& = \prob(M_{s-1} \leq u_n, X_{s-1} \leq u_n, X_{s-1} \leq u_{n-i}) \\
	& = \prob(M_{s-1} \leq u_n, X_{s-1} \leq u_{n-i}) \\
	& = \prob(M_{s-1} \leq u_n, \beta X_{s-2} + \varepsilon_{s-1} \leq u_{n-i}).
\end{split}
\]
Conditioning on the variable $\varepsilon_{s-1}$ gives
\[
P_{s,i} = p \prob(M_{s-1} \leq u_n, X_{s-2} \leq u_{n-i}/\beta) + q \prob(M_{s-1} \leq u_n, X_{s-2} \leq u_{n-i}/\beta+1-1/\beta).
\]
Note that $u_{n-i}/\beta+1-1/\beta = u_{n-(i+1)}$. Since $i+1 \leq j_n$ we have $u_{n-i}/\beta > 1$, which implies that the inequality $X_{s-2} \leq u_{n-i}/\beta$ is already implied by $M_{s-1} \leq u_n$ and hence can be omitted in the first term.

The second part of the lemma follows by repeated application of the recursion.
\end{proof}

\begin{lemma}
For all $2 \leq s \leq j_n+1$ we have
\[
\prob(M_{s} \leq u_n) = 1 - (p(s-1)+1)q^n\psi_{\beta,q}(x).
\]
Moreover, we have
\[
\lim_{n\to\infty} \prob(M_{j_n+1} \leq u_n)^{\lfloor k_n / j_n\rfloor} = \exp\{-p\psi_{\beta,q}(x)\}.
\]
\end{lemma}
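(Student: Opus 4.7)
The plan is to handle the two assertions in succession, relying on the closed-form recursion
\[
P_{s,0} = \sum_{i=0}^{s-2} p q^i P_{1,i} + q^{s-1} P_{1,s-1}
\]
from Lemma \ref{lemma:P_recursion}, together with the identity $P_{1,i} = F_{\beta,p}(u_{n-i}) = 1 - q^{n-i}\psi_{\beta,q}(x)$. This last identity comes from applying Proposition \ref{prop:solution_functional}(ii) to $u_{n-i} = 1 + \beta^{n-i} x$, and is valid for $0 \leq i \leq j_n$, by the very definition of $j_n$ (which guarantees $0 < u_{n-i} < 1$).

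For the first identity, I would substitute the formula for $P_{1,i}$ into the recursion and split into two sums. The constant contributions collect as $p\sum_{i=0}^{s-2} q^i + q^{s-1}$, which equals $p\cdot (1-q^{s-1})/(1-q) + q^{s-1} = 1$ since $1-q = p$. The $\psi_{\beta,q}(x)$ contributions collect as $q^n \psi_{\beta,q}(x)\cdot \bigl(\sum_{i=0}^{s-2} p + 1\bigr) = (p(s-1)+1)q^n\psi_{\beta,q}(x)$, because each factor $q^{n-i}$ combines with $q^i$ to yield a clean $q^n$, and the last term of the recursion contributes the extra $+1$. This produces $\prob(M_s \leq u_n) = 1 - (p(s-1)+1)q^n \psi_{\beta,q}(x)$ directly and is just algebraic bookkeeping.

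For the asymptotic statement, I would first observe that $j_n$ is determined by the inequality $1 + \beta^{n-j_n} x > 0$; solving it gives an explicit estimate $j_n = n + O(1)$, with error bounded by a constant depending only on $x$ and $\beta$. In particular $j_n \to \infty$ and $j_n/n \to 1$. Specializing the first identity to $s = j_n+1$ gives
\[
\prob(M_{j_n+1} \leq u_n) = 1 - (pj_n+1)q^n \psi_{\beta,q}(x),
\]
and the quantity $\varepsilon_n := (pj_n+1)q^n \psi_{\beta,q}(x)$ is of order $n q^n \to 0$. Then $\log(1-\varepsilon_n) \sim -\varepsilon_n$, and multiplying by $\lfloor k_n/j_n\rfloor$ and using $k_n q^n \to 1$ together with $j_n/n \to 1$ yields
\[
\lfloor k_n / j_n \rfloor \log \prob(M_{j_n+1} \leq u_n) \;\longrightarrow\; -p\psi_{\beta,q}(x),
\]
from which the claimed limit follows by exponentiation.

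The only mild obstacle is making sure the asymptotic bookkeeping is clean: the factor $\lfloor k_n/j_n\rfloor$ must be shown to behave like $1/(nq^n)$, and its product with $-\varepsilon_n \sim -pn q^n \psi_{\beta,q}(x)$ must converge precisely to $-p\psi_{\beta,q}(x)$, with the $+1$ and $O(1)$ correction terms in both $pj_n+1$ and $j_n$ contributing nothing in the limit. Once the estimate $j_n = n + O(1)$ is in hand, this is a routine verification.
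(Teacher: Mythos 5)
Your proposal is correct and takes essentially the same route as the paper: substitute $F_{\beta,p}(u_{n-i}) = 1-q^{n-i}\psi_{\beta,q}(x)$ into the closed-form expression of Lemma \ref{lemma:P_recursion} and simplify using $p+q=1$, then set $s=j_n+1$ and use $\lfloor k_n/j_n\rfloor \to \infty$ together with $\lfloor k_n/j_n\rfloor (pj_n+1)q^n \to p$. Your explicit logarithmic bookkeeping with the estimate $j_n = n + O(1)$ is just a slightly more detailed rendering of the paper's final limit computation.
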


\begin{proof}
Lemma \ref {lemma:P_recursion} gives
\[
\prob(M_{s} \leq u_n)
	= \sum_{i=0}^{s-2} pq^i F_{\beta,p}(u_{n-i}) + q^{s-1} F_{\beta,p}(u_{n-(s-1)}).
\]
Substituting $F_{\beta,p}(u_{n-i}) = 1-q^{n-i}\psi_{\beta,q}(x)$ and simplifying the expressions leads to the first statement of the lemma.

The second statement follows from the first statement with $s=j_n+1$ and noting that for $k_n = \lfloor 1/q^n\rfloor$ we have $\lfloor k_n/j_n\rfloor \to \infty$ and 
\[
\lim_{n\to\infty} \lfloor k_n/j_n\rfloor (pj_n+1)q^n = p.
\]
This completes the proof.
\end{proof}

The main result \eqref{eq:main_result} follows after proving the following:

\begin{lemma}
\label{lemma:key_lemma}
For fixed $x < 0$ we have
\[
\lim_{n\to\infty} \big|\prob(M_{k_n} \leq u_n) - \prob(M_{j_n+1} \leq u_n)^{\lfloor k_n/j_n\rfloor}\big| = 0.
\]
\end{lemma}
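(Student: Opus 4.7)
The plan is to imitate the block decomposition of \cite{Chernick:1981}, adapted to the singular marginal $F_{\beta,p}$. Partition $\{0,1,\dots,k_n-1\}$ into $m_n = \lfloor k_n/j_n\rfloor$ blocks $I_s = \{(s-1)j_n,\dots,sj_n\}$ of length $j_n+1$ (so that consecutive blocks share a single endpoint), and set $M^{(s)} := \max_{i\in I_s} X_i$. By stationarity each $M^{(s)}$ has the same distribution as $M_{j_n+1}$, and absorbing the at-most-$j_n$ trailing indices that are not covered by the blocks costs only $j_n(1-F_{\beta,p}(u_n)) = j_n q^n\psi_{\beta,q}(x) = o(1)$. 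It therefore suffices to estimate $\prob\bigl(\bigcap_{s=1}^{m_n}\{M^{(s)}\leq u_n\}\bigr)$.

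The AR structure \eqref{eq:AR_proc} makes $(X_k)$ a Markov chain, and conditional on $X_{(s-1)j_n}$ the block max $M^{(s)}$ is determined by $X_{(s-1)j_n}$ together with the fresh noise $\varepsilon_{(s-1)j_n+1},\dots,\varepsilon_{sj_n}$. Iterated conditioning along the subsampled chain $X_0, X_{j_n}, X_{2 j_n},\dots$ therefore yields
\[
\prob\Bigl(\bigcap_{s=1}^{m_n}\{M^{(s)}\leq u_n\}\Bigr) = \mathbb{E}\Bigl[\mathbf{1}_{\{M^{(1)}\leq u_n\}}\prod_{s=2}^{m_n} g_n\!\bigl(X_{(s-1)j_n}\bigr)\Bigr],
\]
where $g_n(y):=\prob(X_1\leq u_n,\dots,X_{j_n}\leq u_n\mid X_0=y)$. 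To recover the product $L^{m_n}$ with $L := \prob(M_{j_n+1}\leq u_n)$, one replaces each $g_n(X_{(s-1)j_n})$ by its unconditional average $L = \mathbb{E}[g_n(X_0)\mathbf{1}_{\{X_0\leq u_n\}}]$. Using the decomposition $X_{(s-1)j_n+k} = \beta^k X_{(s-1)j_n} + Z_k^{(s)}$ with $Z_k^{(s)} = \sum_{i=1}^{k}\beta^{k-i}\varepsilon_{(s-1)j_n+i}$ depending only on block-local noise, the dependence of $g_n(y)$ on $y$ enters only through shifts of size $\beta^k$ at step $k$, and Lemma \ref{lemma:continuity} bounds the probability that such shifts flip the $k$-th threshold by $\max(p,q)^k$.

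The main obstacle is that a naive summation of these per-step coupling errors yields $\sum_{k=0}^{j_n}\max(p,q)^k = O(1)$ per block, which is far too crude once multiplied by $m_n\to\infty$. The delicate point is to exploit the ``reset'' mechanism of \eqref{eq:AR_proc}: as soon as the first noise $\varepsilon_k = 0$ is encountered inside a block, the value $X_k=\beta X_{k-1}$ drops below $\beta$ and the coupling with an independent copy becomes essentially exact for the remainder of the block. A careful accounting of this reset, combined with the smallness $1 - L \sim j_n q^n$ of the per-block deficit, should yield $\prob\bigl(\bigcap_{s=1}^{m_n}\{M^{(s)}\leq u_n\}\bigr) = L^{m_n}+o(1)$, and the lemma follows from the preceding result's identification of the limit $L^{\lfloor k_n/j_n\rfloor}\to\exp\{-p\psi_{\beta,q}(x)\}$.
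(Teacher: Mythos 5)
Your setup (block decomposition, Markov/conditioning structure, the identification $L=\prob(M_{j_n+1}\leq u_n)$ and the cost $j_nq^n\psi_{\beta,q}(x)$ for the trailing indices) is sound and parallels the paper's strategy, but the proof has a genuine gap exactly where the lemma is hard: you never establish that replacing each conditional factor $g_n(X_{(s-1)j_n})$ by its unconditional average $L$ produces a total error that is $o(1)$ after multiplying over $m_n\to\infty$ blocks. You correctly observe that the naive per-block coupling error is $O(1)$ and hence useless, but the resolution you offer --- a ``reset'' once some $\varepsilon_k=0$ occurs --- is not correct as stated and is not quantified. A zero innovation does not decouple the block from its starting value: the dependence persists through the term $\beta^kX_{(s-1)j_n}$ in $X_{(s-1)j_n+k}=\beta^kX_{(s-1)j_n}+Z_k^{(s)}$, and, more importantly, the conditioning is not merely on the value of $X_{(s-1)j_n}$ but on the whole past event $\bigcap_{i<s}\{M^{(i)}\leq u_n\}$, which biases the law of the block's starting point downward. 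What is actually needed (and what the paper supplies) is a quantitative bound of the form $\prob(A_s\mid \bigcap_{i<s}A_i)-\prob(A_s)=O(q^{j_n})=O(q^n)$ per block, so that summing over $r_n=O(k_n/j_n)$ blocks gives $O(1/j_n)\to 0$.

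The paper gets this in three steps that have no counterpart in your sketch: (i) it inserts small separating gaps $I_i^*$ of length $\ell_n$ between blocks ($\ell_n\to\infty$, $\ell_n/j_n\to 0$), whose removal costs only $r_n\ell_nq^n\to 0$; (ii) it invokes association of the variables $X_j$ (Appendix \ref{sec:associated}) to obtain the one-sided inequality \eqref{eq:strong_inequality}, reducing the factorization error $T_3$ to a sum of conditional-minus-unconditional terms with a known sign; and (iii) it bounds each such term by comparing the conditional law $G(v)$ of the block's starting value with $F_{\beta,p}(v+\beta^{j_n-\ell_n-1})$ (stochastic domination via $X_0=0$ and the $Y_i$ decomposition \eqref{eq:XY_decomposition}), then uses the functional equation \eqref{eq:functional} together with the quantitative continuity estimate of Lemma \ref{lemma:continuity} and a telescoping identity to show the excess is at most $\bigl(\tfrac{p}{1-\tau}+1\bigr)q^{j_n}$ with $\tau=\max\{p,q\}$. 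The singularity of $F_{\beta,p}$ (no density, no regular variation) is precisely why this H\"older-type continuity bound and the functional equation are needed; your proposal does not indicate how the ``careful accounting of the reset'' would substitute for them, so the key estimate remains unproved.
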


\begin{proof}
Write $r_n = \lfloor k_n/j_n\rfloor$ and divide the integers $\{0,\dots,r_n j_n-1\}$ into intervals as follows:
\[
\begin{split}
I_i		& = \{(i-1)j_n,\dots,i j_n-\ell_n-1\}, \\
I_i^*	& = \{ij_n-\ell_n-1,\dots,i j_n-1\},
\end{split}
\]
where $i=1,\dots,r_n$. The intervals $I_i$ all have length $j_n-\ell_n$ and are separated by gaps $I_i^*$ of length $\ell_n$. We will choose the sequence $(\ell_n)$ such that $\ell_n \to \infty$ and $\ell_n / j_n \to 0$ as $n\to\infty$. Observe that
\begin{equation}
\label{eq:key_limits}
j_n q^n\to 0
\quad\text{and}\quad
r_n \ell_n q^n \to 0.
\end{equation}
For any finite set of nonnegative integers $I$ we write $M(I) = \max\{X_i \,:\, i \in I\}$.

We have the following upper bound:
\[
|\prob(M_{k_n} \leq u_n) - \prob(M_{j_n+1} \leq u_n)^{r_n}| \leq T_1 + T_2 + T_3 + T_4,
\]
where
\[
\begin{split}
T_1 & = |\prob(M_{k_n} \leq u_n) - \prob(M_{j_n r_n} \leq u_n)|, \\[2mm]
T_2 & = |\prob(M_{j_n r_n} \leq u_n) - \prob(\textstyle\bigcap_{i=1}^{r_n} \{M(I_i) \leq u_n\})|, \\[2mm]
T_3 & = |\prob(\textstyle\bigcap_{i=1}^{r_n} \{M(I_i) \leq u_n\})-\textstyle\prod_{i=1}^{r_n}\prob(M(I_i) \leq u_n)|, \\[2mm]
T_4 & = |\textstyle\prod_{i=1}^{r_n}\prob(M(I_i) \leq u_n) - \prob(M_{j_n+1} \leq u_n)^{r_n}|.
\end{split}
\]

\emph{Proof that $T_1, T_2, T_4 \to 0$.} Since $k_n - j_n < j_n r_n \leq k_n$ we have for $T_1$ that
\[
\begin{split}
0 \leq \prob(M_{j_n r_n} \leq u_n) - \prob(M_{k_n} \leq u_n)
	& = \prob(X_i > u_n \text{ for some } i=j_n r_n + 1,\dots,k_n) \\
	& \leq (k_n-j_n r_n) (1-F_{\beta,p}(u_n)) \\
	& \leq j_n (1-F_{\beta,p}(u_n)) \\
	& = j_n q^n \psi_{\beta,q}(x).
\end{split}
\]
For $T_2$ we have
\[
\begin{split}
0 \leq \prob(\textstyle\bigcap_{i=1}^{r_n} \{M(I_i) \leq u_n\}) - \prob(M_{j_n r_n} \leq u_n)
	& = \prob(X_i > u_n \text{ for some } i \in I_1^* \cup \dots \cup I_{r_n}^*) \\
	& \leq r_n \ell_n q^n\psi_{\beta,q}(x).
\end{split}
\]
By stationarity we have
\[
T_4 = |\prob(M(I_1) \leq u_n)^{r_n} - \prob(M_{j_n+1} \leq u_n)^{r_n}|.
\]
For $0 \leq x \leq y \leq 1$ and $r > 0$ we have $0 \leq y^r - x^r \leq r(y-x)$, which gives
\[
\begin{split}
0
	& \leq \prob(M(I_1) \leq u_n)^{r_n} - \prob(M_{j_n+1} \leq u_n)^{r_n} \\
	& \leq r_n \big[ \prob(M(I_1) \leq u_n) - \prob(M_{j_n+1} \leq u_n) \big] \\
	& = r_n \prob(X_i > u_n \text{ for some } i = j_n-\ell_n,\dots,j_n) \\
	& \leq r_n (\ell_n+1)q^n\psi_{\beta,q}(x).
\end{split}
\]
The terms $T_1$, $T_2$, and $T_4$ all tend to zero as $n \to \infty$ by equation \eqref{eq:key_limits}.

\emph{Proof that $T_3 \to 0$.}
Below we will use the following notation. For all integers $k \geq 0$ and $i \geq 1$ we have the decomposition
\begin{equation}
\label{eq:XY_decomposition}
X_{k+i} = \beta^i X_k + Y_i,
\end{equation}
where $Y_i$ is a discrete random variable such that
\[
\prob(Y_i \leq 1-\beta^i) = 1, \quad
\prob(Y_i = 1-\beta^i) = q^i, \quad
\prob(Y_i < 1-\beta^i) = 1-q^i.
\]
The dependence of $Y_i$ on $k$ will be suppressed in the notation since the distribution of $Y_i$ does not depend on $k$.

In Appendix \ref{sec:associated} it is shown that for the events $A_s = \{ M(I_s) \leq u_n \}$ we have the following inequality:
\begin{equation}
\label{eq:strong_inequality}
0 \leq \prob\bigg(\bigcap_{s=1}^{r_n} A_s\bigg) - \prod_{s=1}^{r_n} P(A_s)
\leq
\sum_{s=2}^{r_n} \bigg(\prob\bigg(A_s \mid \bigcap_{i=1}^{s-1} A_i\bigg) - \prob(A_s) \bigg).
\end{equation}
It suffices to show that each term in the right-hand side is $O(q^n)$ since in that case the left-hand side is $O(k_n q^n / j_n) = O(1/j_n) \to 0$.

We first consider the term for $s=2$. By introducing the function
\[
G(v) = \prob( X_{j_n-\ell_n-1} \leq v \mid X_0 \leq u_n,\dots,X_{j_n-\ell_n-1} \leq u_n)
\]
it follows that
\[
P(A_2 \mid A_1)
	= \int_0^{u_n} \prob(X_{j_n} \leq u_n,\dots,X_{2j_n-\ell_n-1} \leq u_n \mid X_{j_n-\ell_n-1} = v)\, dG(v).
\]
With the decomposition \eqref{eq:XY_decomposition} the integrand can be rewritten as
\[
\begin{split}
	& \prob(X_{j_n} \leq u_n,\dots,X_{2j_n-\ell_n-1} \leq u_n \mid X_{j_n-\ell_n-1} = v) \\[2mm]
	& \hspace{2cm} = \prob(Y_{\ell_n+1} \leq u_n-\beta^{\ell_n+1} v,\dots,Y_{j_n} \leq u_n-\beta^{j_n} v).
\end{split}
\]
Partition the integration interval into $j_n-\ell_n+1$ pieces as follows:
\[
[0,u_n] = [0,u_{n-j_n}] \cup (u_{n-j_n},u_{n-(j_n-1)}] \cup \dots \cup (u_{n-(\ell_n+2)},u_{n-(\ell_n+1)}] \cup (u_{n-(\ell_n+1)}, u_n].
\]
Note that $u_n - \beta^i v < 1-\beta^i$ if and only if $u_{n-i} < v$. If $0 \leq v \leq u_{n-j_n}$, then
\[
\prob(Y_{\ell_n+1} \leq u_n-\beta^{\ell_n+1} v,\dots,Y_{j_n} \leq u_n-\beta^{j_n} v) = 1.
\]
If $u_{n-i} < v$ for some $i \in \{\ell_n+1,\dots,j_n\}$, then
\[
\begin{split}
	& \prob(Y_{\ell_n+1} \leq u_n-\beta^{\ell_n+1} v,\dots,Y_{j_n} \leq u_n-\beta^{j_n} v) \\
	& \hspace{1cm} = \prob(Y_{i} \leq u_n-\beta^{i} v, \dots,  Y_{j_n} \leq u_n-\beta^{j_n} v) \\
	& \hspace{1cm} \leq \prob(Y_{i} \leq u_n-\beta^{i} v) \\
	& \hspace{1cm} \leq \prob(Y_{i} < 1-\beta^{i}) \\
	& \hspace{1cm} = 1-q^{i}.
\end{split}
\]
The above observations, together with $G(0) = 0$ and $G(u_n)=1$, give
\[
\begin{split}
P(A_2 \mid A_1)
	& \leq G(u_{n-j_n}) + \sum_{i=\ell_n+2}^{j_n} (1-q^{i})[G(u_{n-(i-1)}) - G(u_{n-i})] \\
	& \hspace{3cm} + (1-q^{\ell_n+1})[1 - G(u_{n-(\ell_n+1)})].
\end{split}
\]
Rewriting the terms in the sum as
\[
\begin{split}
(1-q^{i})[G(u_{n-(i-1)}) - G(u_{n-i})]
	& = [(1-q^{i-1})G(u_{n-(i-1)}) - (1-q^{i})G(u_{n-i})] \\
	& \hspace{2cm} + pq^{i-1}G(u_{n-(i-1)}),
\end{split}
\]
gives a telescoping expression between the brackets in the right-hand side. Hence, the inequality can be simplified as follows:
\begin{equation}
\label{eq:simplified}
P(A_2 \mid A_1)
	\leq 1-q^{\ell_n+1} + \sum_{i=\ell_n+1}^{j_n-1} pq^{i}G(u_{n-i}) + q^{j_n} G(u_{n-j_n}).
\end{equation}

For $n$ sufficiently large it follows from $X_0=0$ that $X_i \leq u_n$ for all $i=1,\dots,j_n-\ell_n-1$, and hence
\[
\begin{split}
G(v)
	& = \prob(X_{j_n-\ell_n-1} \leq v \mid X_0 \leq u_n, X_1\leq u_n, \dots,X_{j_n-\ell_n-1} \leq u_n) \\
	& \leq \prob(X_{j_n-\ell_n-1} \leq v \mid X_0 = 0, X_1\leq u_n, \dots, X_{j_n-\ell_n-1} \leq u_n) \\
	& = \prob(Y_{j_n-\ell_n-1} \leq v).
\end{split}
\]
Since $Y_{j_n-\ell_n-1} \geq X_{j_n-\ell_n-1}-\beta^{j_n-\ell_n-1}$ with probability one, we find
\[
G(v) \leq \prob(X_{j_n-\ell_n-1}-\beta^{j_n-\ell_n-1} \leq v) = F_{\beta,p}(v + \beta^{j_n-\ell_n-1}).
\]

For $i \leq j_n-1$ the functional equation \eqref{eq:functional} gives
\[
\begin{split}
	& F_{\beta,p}(u_{n-i} + \beta^{j_n-\ell_n-1}) - F_{\beta,p}(u_{n-i}) \\
	& \hspace{1cm} = p\big[ F_{\beta,p}(u_{n-i}/\beta + \beta^{j_n-\ell_n-2}) - F_{\beta,p}(u_{n-i}/\beta)\big] \\
	& \hspace{2cm} + q\big[ F_{\beta,p}(u_{n-i}/\beta+1-1/\beta + \beta^{j_n-\ell_n-2}) - F_{\beta,p}(u_{n-i}/\beta+1-1/\beta)\big] \\
	& \hspace{1cm} = q\big[ F_{\beta,p}(u_{n-(i+1)} + \beta^{j_n-\ell_n-2}) - F_{\beta,p}(u_{n-(i+1)})\big],
\end{split}
\]
where the last equality follows from $u_{n-i}/\beta+1-1/\beta=u_{n-(i+1)}$ and $u_{n-i}/\beta > 1$ since $i \leq j_n-1$.
Write $\tau = \max\{p,q\}$. Repeating the last inequality $j_n-i$ times and using Lemma \ref{lemma:continuity} gives
\[
\begin{split}
F_{\beta,p}(u_{n-i} + \beta^{j_n-\ell_n-1}) - F_{\beta,p}(u_{n-i})
	& = q^{j_n-i} \big[ F_{\beta,p}(u_{n-j_n} + \beta^{i-\ell_n-1}) - F_{\beta,p}(u_{n-j_n}) \big] \\
	& \leq q^{j_n-i} \tau^{i-\ell_n-1}.
\end{split}
\]
Using these upper bounds in \eqref{eq:simplified} gives
\[
\begin{split}
\prob(A_2 \mid A_1)
	& \leq 1-q^{\ell_n+1} + \sum_{i=\ell_n+1}^{j-1} pq^{i}\big[F_{\beta,p}(u_{n-i}) + q^{j_n-i} \tau^{i-\ell_n-1}\big] + q^{j_n} \big[F_{\beta,p}(u_{n-j_n}) + \tau^{j_n-\ell_n-1}\big] \\
	& \leq 1-q^{\ell_n+1} + \sum_{i=\ell_n+1}^{j_n-1} pq^{i}F_{\beta,p}(u_{n-i}) + q^{j_n} F_{\beta,p}(u_{n-j_n}) + \bigg(\frac{p}{1-\tau}+1\bigg)q^{j_n}.
\end{split}
\]
Recalling that $F_{\beta,p}(u_{n-i}) = 1-q^{n-i}\psi_{\beta,q}(x)$ gives
\[
\begin{split}
	& 1-q^{\ell_n+1} + \sum_{i=\ell_n+1}^{j_n-1} pq^{i}F_{\beta,p}(u_{n-i}) + q^{j_n} F_{\beta,p}(u_{n-j_n}) \\
	& \hspace{2cm} = 1 - (p(j_n-\ell_n-1)+1)q^n\psi_{\beta,q}(x) \\
	& \hspace{2cm} \leq 1 - (p(j_n-\ell_n-2)+1)q^n\psi_{\beta,q}(x)  = P(A_2).
\end{split}
\]
We conclude that
\[
0 \leq \prob(A_2 \mid A_1) - \prob(A_2) \leq \bigg(\frac{p}{1-\tau}+1\bigg)q^{j_n}.
\]
Since $j_n=O(n)$ it follows that the right-hand side is $O(q^n)$, as desired.

Finally, for $s \geq 3$ consider
\[
\prob(A_s \mid \textstyle\bigcap_{i=1}^{s-1} A_i) = P(A_s \mid A_{s-1} \cap B_{s-2})
\quad\text{with}\quad
B_{s-2} = \textstyle\bigcap_{i=1}^{s-2} A_i.
\]
With the function
\[
G(v) = \prob(X_{(s-1)j_n-\ell_n-1} \leq v \mid A_{s-1} \cap B_{s-2})
\]
we have
\[
\prob(A_s \mid A_{s-1} \cap B_{s-2})
=
\int_0^{u_n} \prob(X_{(s-1)j_n} \leq u_n,\dots,X_{sj_n-\ell_n-1} \mid X_{(s-1)j_n-\ell_n-1} = v)\,dG(v).
\]
By stationarity the integrand can be rewritten as
\[
\begin{split}
	& \prob(X_{(s-1)j_n} \leq u_n,\dots,X_{sj_n-\ell_n-1} \mid X_{(s-1)j_n-\ell_n-1} = v) \\
	& \hspace{1cm} = \prob(X_{j_n} \leq u_n,\dots,X_{2j_n-\ell_n-1} \leq u_n \mid X_{j_n-\ell_n-1} = v).
\end{split}
\]
Similar to the case $s=2$ it follows that for $n$ sufficiently large we have
\[
\begin{split}
G(v)
	& = \prob(X_{(s-1)j_n-\ell_n-1} \leq v \mid \{X_{(s-2)j_n-\ell_n-1} \leq u_n,\dots,X_{(s-1)j_n-\ell_n-1} \leq u_n\} \cap B_{s-2}) \\
	& \leq \prob(X_{(s-1)j_n-\ell_n-1} \leq v \mid \{X_{(s-2)j_n-\ell_n-1} = 0,\dots,X_{(s-1)j_n-\ell_n-1} \leq u_n\} \cap B_{s-2}) \\
	& = \prob(Y_{j_n-\ell_n-1} \leq v).
\end{split}
\]
By stationarity we have $\prob(A_s) = \prob(A_2)$, and thus for $s \geq 3$ all the terms in the right-hand side of equation \eqref{eq:strong_inequality} have the same upper bound as in the case $s=2$.
\end{proof}


\newpage

\section{Connections with deterministic dynamics}

In this section we show how the autoregressive process \eqref{eq:AR_proc} is related to a deterministic dynamical system and we point out the relevance of max-semistable distributions in the latter context. To that end, consider the map
\begin{equation}
\label{eq:map_f}
f_\beta : [0,1] \to \mathbb{R}, \quad f(x) = \begin{cases} x/\beta & \text{if } 0 \leq x < 1-\beta, \\ x/\beta+1-1/\beta & \text{if } 1-\beta \leq x \leq 1, \end{cases}
\end{equation}
and the set
\[
\Lambda_\beta = \bigcap_{n=0}^\infty f_\beta^{-n}([0,1]).
\]
By construction, $\Lambda_\beta$ is the middle-$(1-2\beta)$ Cantor set. Since $f_\beta$ maps $\Lambda_\beta$ into itself we can view iterations of $f_\beta$ as a discrete-time deterministic dynamical system on the state space $\Lambda_\beta$.

We briefly sketch how every distribution function $F_{\beta,p}$ yields an $f_\beta$-invariant measure on the set $\Lambda_\beta$; see~\cite{Berger:2001, DajaniKalle:2021} for details. The sets
\begin{equation}
\label{eq:generating_sets}
(a,b] \cap \Lambda_\beta
\quad\text{where}\quad
0 \leq a \leq b \leq 1,
\end{equation}
form a semi-algebra. The map defined by
\[
\mu_{\beta,p}((a,b] \cap \Lambda_\beta) = F_{\beta,p}(b) - F_{\beta,p}(a)
\]
can be shown to be countably additive and hence can be extended to a measure on the $\sigma$-algebra generated by the sets in equation \eqref{eq:generating_sets}.

\begin{lemma}
For all measurable sets $A \subseteq \Lambda_\beta$ we have
\[
\mu_{\beta,p}(f_\beta^{-1}(A)) = \mu_{\beta,p}(A).
\]
\end{lemma}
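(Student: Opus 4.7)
The plan is to verify the identity on the generating semi-algebra \eqref{eq:generating_sets} and then invoke the standard uniqueness of extension from a semi-algebra to the generated $\sigma$-algebra. Since $\mu_{\beta,p}$ and $A \mapsto \mu_{\beta,p}(f_\beta^{-1}(A))$ are both finite measures, it suffices to check they agree on sets of the form $(a,b] \cap \Lambda_\beta$ with $0 \le a \le b \le 1$.

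\textbf{Computing the preimage.} The map $f_\beta$ consists of two increasing affine branches: $x \mapsto x/\beta$ on $[0,\beta]$ and $x \mapsto x/\beta + 1 - 1/\beta$ on $[1-\beta,1]$. Solving $f_\beta(x) \in (a,b]$ branch by branch yields
\[
f_\beta^{-1}\big((a,b]\big) \;=\; (a\beta,b\beta] \;\cup\; (1-\beta+a\beta,\,1-\beta+b\beta] \;\cup\; N,
\]
where $N$ consists of points in the middle gap $(\beta,1-\beta)$; since $N \cap \Lambda_\beta = \emptyset$, it contributes nothing. Hence
\[
\mu_{\beta,p}\big(f_\beta^{-1}((a,b]\cap\Lambda_\beta)\big) = \big[F_{\beta,p}(b\beta) - F_{\beta,p}(a\beta)\big] + \big[F_{\beta,p}(1-\beta+b\beta)-F_{\beta,p}(1-\beta+a\beta)\big].
\]

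\textbf{Applying the functional equation.} I would use \eqref{eq:functional} in two substituted forms. Setting $x = \beta y$ with $y \in [0,1]$ gives $F_{\beta,p}(\beta y) = p F_{\beta,p}(y) + q F_{\beta,p}(y+1-1/\beta)$; since $y+1-1/\beta \le 0$ when $\beta \le 1/2$, the second term vanishes, recovering Proposition \ref{prop:solution_functional}(i):
\[
F_{\beta,p}(\beta y) = p\, F_{\beta,p}(y).
\]
Setting $x = 1-\beta+\beta y$ gives $x/\beta = 1/\beta - 1 + y \ge 1$, so $F_{\beta,p}(x/\beta) = 1$, and $x/\beta + 1 - 1/\beta = y$, so \eqref{eq:functional} yields
\[
F_{\beta,p}(1-\beta+\beta y) = p + q\, F_{\beta,p}(y).
\]
Substituting these two identities (with $y=a$ and $y=b$) into the preimage measure, the constants cancel in the second bracket and we obtain
\[
\mu_{\beta,p}\big(f_\beta^{-1}((a,b]\cap\Lambda_\beta)\big) = (p+q)\big(F_{\beta,p}(b)-F_{\beta,p}(a)\big) = \mu_{\beta,p}\big((a,b]\cap\Lambda_\beta\big),
\]
as required.

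\textbf{Conclusion and main obstacle.} Invariance on the semi-algebra, combined with Caratheodory's extension theorem (or equivalently the $\pi$--$\lambda$ theorem applied to the finite measures $A \mapsto \mu_{\beta,p}(A)$ and $A \mapsto \mu_{\beta,p}(f_\beta^{-1}(A))$), yields invariance on all measurable subsets of $\Lambda_\beta$. The main obstacle is really only bookkeeping: identifying the two instances in which the functional equation \eqref{eq:functional} collapses because one of the arguments lands outside $[0,1]$ (so that $F_{\beta,p}$ evaluates to $0$ or $1$), and correctly pairing the contributions from the two branches of $f_\beta^{-1}$ so that the probabilities $p$ and $q$ sum to one. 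There is no deeper difficulty.
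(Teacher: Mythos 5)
Your proof is correct and follows essentially the same route as the paper: verify invariance on the generating sets $(a,b]\cap\Lambda_\beta$ via the functional equation \eqref{eq:functional} (with the two collapses $F_{\beta,p}(y+1-1/\beta)=0$ and $F_{\beta,p}(x/\beta)=1$), then extend to the full $\sigma$-algebra by uniqueness of extension. You are merely a bit more explicit than the paper about the $\pi$--$\lambda$/Carath\'eodory step and the second-branch computation, which the paper dispatches with ``similar reasoning.''
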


\begin{proof}
It suffices to check this for the generators of the $\sigma$-algebra. Let $0 \leq a \leq b \leq 1$, and observe that
\[
f_\beta^{-1}((a,b]\cap\Lambda_\beta) = ((\beta a, \beta b] \cap \Lambda_\beta) \cup ((\beta a + 1-\beta, \beta b + 1-\beta] \cap \Lambda_\beta).
\]
Using the functional equation \eqref{eq:functional} and $1-1/\beta \leq -1$ gives
\[
\begin{split}
\mu_{\beta,p}((\beta a, \beta b] \cap \Lambda_\beta)
	& = F_{\beta,p}(\beta b) - F_{\beta,p}(\beta a) \\
	& = p F_{\beta,p}(b) + q F_{\beta,p}(b+1-1/\beta) \\
	& \hspace{2cm} - p F_{\beta,p}(a) - q F_{\beta,p}(a+1-1/\beta) \\
	& = p \big [F_{\beta,p}(b) - F_{\beta,p}(a)\big].
\end{split}
\]
Similar reasoning gives
\[
\mu_{\beta,p}((\beta a + 1-\beta, \beta b + 1-\beta] \cap \Lambda_\beta) = q \big [F_{\beta,p}(b) - F_{\beta,p}(a)\big],
\]
and hence
\[
\mu_{\beta,p}(f_\beta^{-1}((a,b]\cap\Lambda_\beta)) = \mu_p((a,b]\cap\Lambda_\beta).
\]
This completes the proof.
\end{proof}

The dynamical system $f_\beta : \Lambda_\beta \to \Lambda_\beta$ is related to the autoregressive process \eqref{eq:AR_proc} as follows. If $X_{k+1} = \beta X_k + \varepsilon_{k+1}$, then considering the two cases $\varepsilon_{k+1}=0$ and $\varepsilon_{k+1}=1-\beta$ results into the relation
\[
X_k = f_\beta(X_{k+1}),
\]
which means that we can interpret the map $f_\beta$ as a right inverse of the process.
This relation, together with the fact that $f_\beta$ preserves the measures $\mu_{\beta,p}$, also shows that the process \eqref{eq:AR_proc} is stationary. Indeed, for any choice of integers $0 \leq i_1 < \dots < i_r$ and $m>0$ we have
\[
\begin{split}
\prob\bigg( \bigcap_{k=1}^r \{X_{i_k} \in [0,u_k]\}\bigg)
	& = \prob\bigg( \bigcap_{k=1}^r \{f_\beta^m(X_{i_k+m}) \in [0,u_k]\}\bigg) \\
	& = \prob\bigg( \bigcap_{k=1}^r \{X_{i_k+m} \in f_\beta^{-m}([0,u_k])\}\bigg) \\
	& = \prob\bigg( \bigcap_{k=1}^r \{X_{i_k+m} \in [0,u_k]\}\bigg).
\end{split}
\]

The map $f_\beta$ exhibits sensitive dependence on initial conditions as its Lyapunov exponent (computed with respect to any of the measures $\mu_{\beta,p}$) is $\lambda = -{\log\beta} > 0$. Treating the initial condition as a random variable $Y_0$ gives a stochastic process of the form $Y_{k+1} = f_\beta(Y_k)$. If $Y_0 \sim F_{\beta,p}$, the process $(Y_k)$ is again stationary with marginal distribution function $F_{\beta,p}$. For a chosen initial condition $y_0 \in \Lambda_\beta$ which is not a periodic point of $f_\beta$ (these form a set of measure zero) the orbit $f_\beta^k(y_0)$ can be seen as a realization of the process $(Y_k)$.

Observe the qualitative differences between the cases $0 < \beta < 1/2$ and $\beta=1/2$. For $\beta=p=1/2$ the autoregressive process \eqref{eq:AR_proc} has uniform marginals and belongs to a family of autoregressive processes studied in \cite{Chernick:1981}. In this case, the map $f_\beta$ of equation~\eqref{eq:map_f} reduces to the well-known doubling map for which extreme value laws were proven along completely different lines by means of self-similarity arguments \cite{Haiman:2018} and Fibonacci numbers \cite{BS:2021}. If $\beta=1/2$ but $p \neq 1/2$ then the distribution $F_{\beta,p}$ is strictly increasing and supported on the interval $[0,1]$ rather than the Cantor set $\Lambda_\beta$.

The extension of extreme value theory to the setting of chaotic deterministic dynamical systems is a rapidly expanding area of research; see \cite{Lucarini2016} and references therein. Given a dynamical system the goal is to find limit laws for large values attained by an observable function evaluated along typical orbits of the system. In this context, the process \eqref{eq:AR_proc} was studied numerically for $\beta=1/3$ in the form of an iterated function system in \cite{LFTV:12}, but in that paper the marginal distribution is in the domain of attraction of a max-stable distribution due to the particular choice of the observable.

In general, extreme value laws for dynamical systems depend on the regularity of both the invariant measure and the observable. Chaotic systems often have invariant sets or attractors with a fractal geometric structure. Indeed, the map $f_\beta$ defined above is an example of this. As a consequence, invariant measures supported on these sets may not have regularly varying tails. This may also explain why for certain types of observables estimates of the tail index exhibit persistent oscillatory behaviour \cite{HVRSB:12, S:2016}. Hence, in addition to max-stable distributions, max-semistable distributions may have a natural place in studying extremal behaviour of these systems \cite{HS:2021}.


\appendix

\newpage

\section{Associated random variables}
\label{sec:associated}

We have the following general result:

\begin{lemma}
\label{lemma:cond_ineq}
For any events $A_1,\dots,A_r$ with $r \geq 2$ we have
\[
\bigg|\prob\bigg(\bigcap_{i=1}^{r} A_i\bigg) - \prod_{i=1}^{r} \prob(A_i)\bigg|
\leq
\sum_{i=2}^{r} \bigg|\prob\bigg(A_i \mid \bigcap_{j=1}^{i-1} A_i\bigg) - \prob(A_i) \bigg|.
\]
\end{lemma}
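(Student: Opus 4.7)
The plan is a one-step chain-rule-plus-telescoping argument. First I would rewrite the joint probability by the multiplicative chain rule,
\[
\prob\bigg(\bigcap_{i=1}^{r} A_i\bigg) = \prod_{i=1}^{r} p_i, \qquad \text{where } p_i := \prob\bigg(A_i \mid \bigcap_{j=1}^{i-1} A_j\bigg),
\]
with the convention that $p_1 = \prob(A_1)$ (the empty intersection is the sample space). If some intersection has probability zero, both sides of the desired inequality are easily seen to vanish, so without loss of generality we may assume all the conditional probabilities are well-defined.

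Next I would set $q_i := \prob(A_i)$ and invoke the standard identity
\[
\prod_{i=1}^{r} p_i - \prod_{i=1}^{r} q_i = \sum_{k=1}^{r} \bigg(\prod_{i=1}^{k-1} p_i\bigg)(p_k - q_k)\bigg(\prod_{i=k+1}^{r} q_i\bigg),
\]
which is the telescoping decomposition obtained by changing one factor at a time from $q$ to $p$. Since all $p_i$ and $q_i$ lie in $[0,1]$, each product in front of $(p_k - q_k)$ is bounded by $1$, so applying the triangle inequality gives
\[
\bigg|\prod_{i=1}^{r} p_i - \prod_{i=1}^{r} q_i\bigg| \leq \sum_{k=1}^{r} |p_k - q_k|.
\]

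Finally I would observe that the $k=1$ term vanishes because $p_1 = q_1 = \prob(A_1)$, which drops the sum to $k=2,\dots,r$ and yields exactly the claimed bound. There is no real obstacle here; the only subtlety is making sure the chain-rule rewriting is legitimate, i.e.\ that one either assumes positivity of the partial intersections or handles the degenerate case separately. Everything else is a one-line telescoping estimate that requires only $0 \leq p_i, q_i \leq 1$.
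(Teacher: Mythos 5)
Your proof is correct and is essentially the paper's argument: the paper also telescopes between the chained product $\prod_i \prob(A_i \mid \bigcap_{j<i} A_j)$ and $\prod_i \prob(A_i)$ (by inserting zero terms one factor at a time) and bounds the prefactors by $1$. One small remark: your claim that both sides vanish when some partial intersection has probability zero is not quite right (the left side need not vanish and the right side is then undefined), but this is immaterial since the lemma, like the paper, implicitly assumes the conditional probabilities are well-defined.
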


\begin{proof}
The case $r=2$ is trivial. Now let $r \geq 3$, and observe that for $i=1,\dots,r-2$ we have
\[
\bigg[\prod_{j=r-i+1}^{r} \prob(A_j) \bigg] \times \bigg[{-\prob}\bigg(\bigcap_{j=1}^{r-i} A_j\bigg) + \prob\bigg(A_{r-i} \mid \bigcap_{j=1}^{r-i-1} A_j\bigg)\prob\bigg(\bigcap_{j=1}^{r-i-1} A_j\bigg)\bigg] = 0.
\]
Adding these terms between the difference of
\[
\prob\bigg(\bigcap_{i=1}^{r} A_i\bigg)
	= \prob\bigg(A_r \mid \bigcap_{i=1}^{r-1} A_i\bigg)\prob\bigg(\bigcap_{i=1}^{r-1} A_i\bigg)
\quad\text{and}\quad
\prod_{i=1}^{r} P(A_i)
\]
results in a sum consisting of $r-1$ differences. After taking absolute values the claim follows.
\end{proof}

In the proof of Lemma \ref{lemma:key_lemma} we need a stronger result without absolute bars. This is not possible in general, but for the autoregressive process \eqref{eq:AR_proc} this can be achieved by means of the concept of associated random variables \cite{EsaryProschanWalkup:1967}.

\begin{definition}
A finite set of random variables $X_1,\dots,X_n$ is called \emph{associated} if
\[
\cov(f(X_1,\dots,X_n), g(X_1,\dots,X_n)) \geq 0
\]
for all functions $f$ and $g$ which are nondecreasing in each of their arguments.
\end{definition}

The following properties are derived in \cite{EsaryProschanWalkup:1967}:
\begin{enumerate}[(i)]
\item Subsets of associated variables are again associated.
\item If two sets of associated random variables are independent of one another, then their union is again a set of associated random variables.
\item The set consisting of a single random variable is associated.
\item Nondecreasing functions of associated random variables are associated.
\end{enumerate}

In addition, the following theorem is proven \cite[Theorem 5.1]{EsaryProschanWalkup:1967}:

\begin{theorem}
\label{thm:EPW_thm5.1}
Assume that the variables $X_1,\dots,X_n$ are associated and that the functions $f_1,\dots,f_k$ are nondecreasing in each of their arguments. Then for the variables $Y_i = f_i(X_1,\dots,X_n)$ we have the following inequality:
\[
\prob\bigg( \bigcap_{i=1}^k \{Y_i \leq y_i\}\bigg)
\geq \prod_{i=1}^k \prob(Y_i \leq y_i).
\]
\end{theorem}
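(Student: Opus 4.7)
The plan is to reduce the claim to a statement purely about $Y_1,\dots,Y_k$ and then apply the defining covariance inequality of association within an induction on $k$.

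First I would invoke property~(iv) listed just before the theorem: since each $f_i$ is nondecreasing in every argument, the tuple $Y_1,\dots,Y_k$ is itself an associated set of random variables. This reduces the proof to establishing the following claim: if $Y_1,\dots,Y_k$ are associated, then
\[
\prob\bigg(\bigcap_{i=1}^{k}\{Y_i \leq y_i\}\bigg) \geq \prod_{i=1}^{k}\prob(Y_i \leq y_i).
\]

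I would prove this reduced statement by induction on $k$. The case $k=1$ is trivial. For the inductive step, set
\[
h(Y_1,\dots,Y_k) = \prod_{i=1}^{k-1}\mathbf{1}_{\{Y_i \leq y_i\}}
\quad\text{and}\quad
g(Y_1,\dots,Y_k) = \mathbf{1}_{\{Y_k \leq y_k\}}.
\]
Both functions are bounded, nonnegative, and nonincreasing in each argument---since products of nonnegative nonincreasing functions remain nonincreasing---so $-h$ and $-g$ are nondecreasing in each argument. Applying the definition of association to the pair $(-h,-g)$ yields $\cov(h,g) = \cov(-h,-g) \geq 0$, which rearranges to
\[
\prob\bigg(\bigcap_{i=1}^{k}\{Y_i \leq y_i\}\bigg) \geq \prob\bigg(\bigcap_{i=1}^{k-1}\{Y_i \leq y_i\}\bigg)\,\prob(Y_k \leq y_k).
\]
The inductive hypothesis bounds the first factor on the right by $\prod_{i=1}^{k-1}\prob(Y_i \leq y_i)$, completing the induction.

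I do not anticipate a serious obstacle. The only point of care is the sign convention: the definition of association is phrased for nondecreasing $f,g$, whereas the indicators of $\{Y_i \leq y_i\}$ are nonincreasing. The trivial identity $\cov(-h,-g)=\cov(h,g)$ absorbs this, and the boundedness of the indicators guarantees that all covariances are well defined, so no further adjustment is needed.
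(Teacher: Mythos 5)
Your proof is correct. The paper does not prove this theorem itself but quotes it from Esary, Proschan and Walkup, and your argument is essentially the standard one from that reference: property (iv) makes $Y_1,\dots,Y_k$ associated, and the association inequality applied to the negated indicator functions $-h,-g$ yields $\prob(\cap_{i=1}^{k}\{Y_i\leq y_i\}) \geq \prob(\cap_{i=1}^{k-1}\{Y_i\leq y_i\})\,\prob(Y_k\leq y_k)$, after which induction finishes the job. The only step left implicit is that applying the inductive hypothesis to $Y_1,\dots,Y_{k-1}$ uses property (i) (subsets of associated variables are associated), which is immediately available from the list preceding the theorem.
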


For the autoregressive process \eqref{eq:AR_proc} observe that
\[
X_j = \beta^j X_0 + \beta^{j-1}\varepsilon_1 + \dots + \beta\varepsilon_{j-1} + \varepsilon_j
\]
is a nondecreasing function of the variables $X_0,\varepsilon_1,\dots,\varepsilon_j$ which are independent and thus associated. Hence, every finite set of the variables $X_j$ is associated.

Finally, for sets of nonnegative integers $I_1,\dots,I_r$ write $M(I_i) = \max\{X_j \,:\, j \in I_i\}$. Theorem~\ref{thm:EPW_thm5.1} gives
\[
\begin{split}
\prob\bigg( \bigcap_{s=1}^r \{M(I_s) \leq u_n\} \bigg)
	& \geq \prod_{s=1}^{r} \prob(M(I_s) \leq u_n), \\
\prob\bigg( \{M(I_s) \leq u_n\} \mid \bigcap_{i=1}^{s-1} \{M(I_i) \leq u_n\} \bigg)
	& \geq \prob(M(I_s) \leq u_n).
\end{split}
\]
Therefore, in this case it is permitted to omit the absolute value bars in Lemma \ref{lemma:cond_ineq}.


\newpage


\begin{thebibliography}{10}

\bibitem{Beirlant:2004}
J.~Beirlant, Y.~Goegebeur, J.~Teugels, and J.~Segers.
\newblock {\em Statistics of Extremes: Theory and Applications}.
\newblock John Wiley and Sons, 2004.

\bibitem{Berger:2001}
A.~Berger.
\newblock {\em Chaos and Chance. An Introduction to Stochastic Aspects of
  Dynamics}.
\newblock De Gruyter, 2001.

\bibitem{BS:2021}
N.B. Boer and A.E. Sterk.
\newblock Generalized {Fibonacci} numbers and extreme value laws for the
  {R\'enyi} map.
\newblock {\em Indagationes Mathematicae}, 32:704--718, 2021.

\bibitem{CantoHaanTemido:2002}
L.~Canto~e Castro, L.~de~Haan, and M.G. Temido.
\newblock Rarely observed sample maxima.
\newblock {\em Theory of Probability and its Applications}, 45:779--782, 2002.

\bibitem{CantoDiasTemido:2009}
L.~Canto~e Castro, S.~Dias, and M.G. Temido.
\newblock Tail inference for a law in a max-semistable domain of attraction.
\newblock {\em Pliska Studia Mathematica Bulgarica}, 19:83--96, 2009.

\bibitem{Chernick:1981}
M.R. Chernick.
\newblock A limit theorem for the maximum of autoregressive processes with
  uniform marginal distributions.
\newblock {\em The Annals of Probability}, 9:145--149, 1981.

\bibitem{DajaniKalle:2021}
K.~Dajani and C.~Kalle.
\newblock {\em A First Course in Ergodic Theory}.
\newblock CRC Press, 2021.

\bibitem{Embrechts}
P.~Embrechts, C.~Kl{\"u}ppelberg, and T.~Mikosch.
\newblock {\em Modelling Extremal Events for Insurance and Finance}.
\newblock Springer-Verlag, 1997.

\bibitem{EsaryProschanWalkup:1967}
J.D. Esary, F.~Proschan, and D.W. Walkup.
\newblock Association of random variables, with applications.
\newblock {\em The Annals of Mathematical Statistics}, 38:1466--1474, 1967.

\bibitem{Galambos:78}
J.~Galambos.
\newblock {\em The Asymptotic Theory of Extreme Order Statistics}.
\newblock John Wiley and Sons, 1978.

\bibitem{Grinevich:92}
I.V. Grinevich.
\newblock Max-semistable limit distributions corresponding to linear and power
  normalizations.
\newblock {\em Theory of Probability and its Applications}, 37:720--721, 1992.

\bibitem{Grinevich:94}
I.V. Grinevich.
\newblock Domains of attraction of the max-semistable laws under linear and
  power normalizations.
\newblock {\em Theory of Probability and its Applications}, 38:640--650, 1994.

\bibitem{HF:2006}
L.~de Haan and A.~Ferreira.
\newblock {\em Extreme Value Theory. An Introduction}.
\newblock Springer, 2006.

\bibitem{Haiman:2018}
G.~Haiman.
\newblock Level hitting probabilities and extremal indexes for some particular
  dynamical systems.
\newblock {\em Methodology and Computing in Applied Probability}, 20:553--562,
  2018.

\bibitem{HS:2021}
M.P. Holland and A.E. Sterk.
\newblock On max-semistable laws and extremes for dynamical systems.
\newblock {\em Entropy}, 23:1192, 2021.

\bibitem{HVRSB:12}
M.P. Holland, R.~Vitolo, P.~Rabassa, A.E. Sterk, and H.W. Broer.
\newblock Extreme value laws in dynamical systems under physical observables.
\newblock {\em Physica D: Nonlinear Phenomena}, 241:497--513, 2012.

\bibitem{Leadbetter1980}
M.R. Leadbetter, G.~Lindgren, and H.~Rootz\'en.
\newblock {\em Extremes and Related Properties of Random Sequences and
  Processes}.
\newblock Springer-Verlag, 1980.

\bibitem{Lucarini2016}
V.~Lucarini, D.~Faranda, A.C.M. Freitas, J.M. Freitas, M.~Holland, T.~Kuna,
  M.~Nicol, M.~Todd, and S.~Vaienti.
\newblock {\em Extremes and Recurrence in Dynamical Systems}.
\newblock Wiley, 2016.

\bibitem{LFTV:12}
V.~Lucarini, D.~Faranda, G.~Turchetti, and S.~Vaienti.
\newblock Extreme value theory for singular measures.
\newblock {\em Chaos}, 22:023135, 2012.

\bibitem{Megyesi:02}
Z.~Megyesi.
\newblock Domains of geometric partial attraction of max-semistable laws:
  structure, merge and almost sure limit theorems.
\newblock {\em Journal of Theoretical Probability}, 15:973--1005, 2002.

\bibitem{Pancheva:92}
E.~Pancheva.
\newblock Multivariate max-semistable distributions.
\newblock {\em Theory of Probability and its Applications}, 37:731--732, 1992.

\bibitem{Resnick:1987}
S.I. Resnick.
\newblock {\em Extreme Values, Regular Variation, and Point Processes}.
\newblock Springer-Verlag, 1987.

\bibitem{S:2016}
A.E. Sterk.
\newblock Extreme amplitudes of a periodically forced {D}uffing oscillator.
\newblock {\em Indagationes Mathematicae}, 27:1059--1067, 2016.

\bibitem{TemidoCanto:2003}
M.G. Temido and L.~Canto~e Castro.
\newblock Max-semistable laws in extremes of stationary random sequences.
\newblock {\em Theory of Probability and its Applications}, 47:365--374, 2003.

\end{thebibliography}
\end{document}